\tikzstyle{aNode} = [circle, fill = black]
\tikzstyle{bNode} = [circle,draw = black, thick]
\newcommand{\ppoints}[1]{%
\begin{tikzpicture}[inner sep = 0.7pt, #1]%
\node (1) at (0,-2) [aNode]{};
\node (3) at (1.5,-2) [aNode]{};
\node (2) at (0.75,-1) [aNode]{};
\end{tikzpicture}%
}
\def\points{\ppoints{scale=0.08}}
\newcommand{\ppedge}[1]{%
\begin{tikzpicture}[inner sep = 0.7pt, #1]%
\node (1) at (0,-2) [aNode]{};
\node (3) at (1.5,-2) [aNode]{};
\node (2) at (0.75,-1) [aNode]{};
\draw[thick] (0, -2) -- (1.5, -2);
\end{tikzpicture}%
}
\def\pedge{\ppedge{scale=0.08}}
\newcommand{\ppcherry}[1]{%
\begin{tikzpicture}[inner sep = 0.7pt, #1]%
\node (1) at (0,-2) [aNode]{};
\node (3) at (1.5,-2) [aNode]{};
\node (2) at (0.75,-1) [aNode]{};
\draw[thick] (0, -2) -- (0.75, -1);
\draw[thick] (1.5, -2) -- (0.75, -1);
\end{tikzpicture}%
}
\def\pcherry{\ppcherry{scale=0.08}}
\newtheorem{theorem}{Theorem}[section]
\newtheorem{lemma}[theorem]{Lemma}
\newtheorem{proposition}[theorem]{Proposition}
\newtheorem*{definition}{Definition}
\newtheorem{claim}[theorem]{Claim}
\newtheorem{question}[theorem]{Question}
\newcommand\NN{{\mathbb N}}
\newcommand\pa{{\pi_{\points}^{\operatorname{pal}}}}
\newcommand\pu{{\pi_{\points}}}
\newcommand\pve{{\pi_{\pedge}}}
\newcommand\pch{{\pi_{\pcherry}}}
\newcommand\cp{{\mathcal{P}}}
\newcommand\pdpal{{\pi_{\pedge}^{\operatorname{pal}}}}
\newcommand\pdch{{\pi_{\pcherry}^{\operatorname{pal}}}}
\DeclareTextCompositeCommand{\v}{OT1}{l}{l\nobreak\hspace{-.1em}'}
\DeclareTextCompositeCommand{\v}{OT1}{t}{t\nobreak\hspace{-.1em}'\nobreak\hspace{-.15em}}
\begin{document}
\title{Palettes determine uniform Tur\'an density}
\author{Ander Lamaison\thanks{Extremal Combinatorics and Probability Group (ECOPRO),  Institute for Basic Science (IBS), Daejeon, South Korea. Supported by IBS-R029-C4. \textbf{Email address}: ander@ibs.re.kr}}
\date{} 
\maketitle

\begin{abstract}
Tur\'an problems, which concern the minimum density threshold required for the existence of a particular substructure, are among the most fundamental problems in extremal combinatorics. We study Tur\'an problems for hypergraphs with an additional uniformity condition on the edge distribution. This kind of Tur\'an problems was introduced by Erd\H{o}s and S\'os in the 1980s but it took more than 30 years until the first non-trivial exact results were obtained when Glebov, Kr\'al' and Volec [Israel J. Math. 211 (2016), 349--366] and Reiher, R\"odl and Schacht [J. Eur. Math. Soc. 20 (2018), 1139--1159] determined the uniform Tur\'an density of $K_4^{(3)-}$.

Subsequent results exploited the powerful \emph{hypergraph regularity method}, developed by Gowers and by Nagle, R\"odl and Schacht about two decades ago. Central to the study of the uniform Tur\'an density of hypergraphs are \emph{palette constructions}, which were implicitly introduced by R\"odl in the 1980s. We prove that palette constructions always yield tight lower bounds, unconditionally confirming present empirical evidence. This results in new and simpler approaches to determining uniform Tur\'an densities, which completely bypass the use of the hypergraph regularity method.
\end{abstract}

\section{Introduction}

In extremal combinatorics, Tur\'an problems, 
which vastly generalize the classical Tur\'an's theorem from 1941,
concern the threshold density for the existence of 
a specific substructure in a host structure;
this threshold density is referred to as the \emph{Tur\'an density}.
While Tur\'an densities are very well-understood
in the case of graphs~\cite{Man07, Tur41, ErdS46},
Tur\'an problems concerning hypergraphs are
one of the most challenging problems in extremal combinatorics.
Indeed, Erd\H{o}s offered \$500 for determining the Tur\'an density
of \emph{any} complete hypergraph
and \$1000 for determining the Tur\'an densities of all complete hypergraphs.
Most of the extremal constructions for Tur\'an problems
in the hypergraph setting have large independent sets,
i.e., linear-sized sets of vertices with no edges.
This led Erd\H{o}s and S\'os~\cite{ErdS82, Erd90}
to propose studying the \emph{uniform Tur\'an density} of hypergraphs,
which is the density threshold for the existence of a hypergraph
with the additional requirement that
the edges of the host hypergraph are distributed uniformly.

Only recently, the resistance of uniform Tur\'an densities
has been broken using approaches
based on the hypergraph regularity method~\cite{ReiRS18a, BucCKMM23, GarKL24},
starting with resolving an almost 40-year-old problem
by Erd\H{o}s and S\'os on determining
the uniform Tur\'an density of the $3$-uniform hypergraph $K_4^{(3)-}$.
All known exact results match lower bounds
obtained by palette constructions~\cite{Rei20},
which extend the lower bound constructions
due to R\"odl~\cite{Rod86} from the 1980's.
Our main result asserts that this is a general phenomenon:
\emph{the uniform Tur\'an density of
every hypergraph $H$ is equal to the supremum
of the densities of palette constructions avoiding $H$}.
Since palette constructions are much simpler to analyze
than the reduced hypergraphs appearing in the approaches
based on the hypergraph regularity method
(as we demonstrate on the cases of
$K_4^{(3)-}$ and $C_\ell^{(3)}$ further),
our result brings a new tool for determining the
uniform Tur\'an density of hypergraphs,
in addition to resolving whether the uniform Tur\'an densities
are determined by palette constructions,
a problem which has been widely circulating around in the community
as discussed in~\cite[Section 3]{Rei20}.
As evidence to support this claim,
we show that there exists a $3$-uniform hypergraph
with uniform Tur\'an density equal to $\frac12-\frac{1}{2k}$
for all $k\geq 2$ (see Subsection~\ref{sec:palettes});
note that the set of known uniform Tur\'an densities
was finite prior to this work, specifically it consisted
of $0$, $1/27$, $4/27$, $1/4$ and $8/27$.


We now define the concepts studied in the paper formally and
present them in a broader context.
For an $r$-uniform hypergraph $F$
(or $r$-graph for short),
 the \emph{extremal number} 
$\operatorname{ex}(n,F)$
is the maximum number of edges
of an $r$-graph $H$ with $n$ vertices
not containing $F$ as a subgraph.
The \emph{Tur\'an density} of an $r$-graph $F$
is defined as the limit
\[\pi(F):=\lim\limits_{n\rightarrow\infty}\frac{\operatorname{ex}(n,F)}{\binom{n}{r}}.\]
The existence of this limit follows
from the fact that the function
on the right hand side
is non-increasing on $n$
(see~\cite{KatNS64}).
The Tur\'an density of graphs is well-understood:
Tur\'an~\cite{Tur41}
determined the extremal numbers 
$\operatorname{ex}(n, K_k)$ of all complete graphs $K_k$,
while Erd\H{o}s and Stone~\cite{ErdS46}
proved that the Tur\'an density $\pi(G)$ of all graphs $G$
equals $\frac{\chi(G)-2}{\chi(G)-1}$,
where $\chi(G)$ is the chromatic number of $G$.

In contrast, for $r$-graphs $F$ with $r\geq 3$,
computing the value of $\pi(F)$
remains an elusive problem
even in some of the smallest cases.
To this day, the value of the Tur\'an density
$\pi(K_t^{(r)})$ of the complete $r$-graph with $t$ vertices
has not been obtained for any $t>r>2$.
Even for the very simple hypergraph $K_4^{(3)-}$,
obtained by removing an edge from $K_4^{(3)}$,
the Tur\'an density is unknown~\cite{Kee11}.
The best bounds on these values are
$5/9\leq \pi(K_4^{(3)})\leq 0.5616$
and $2/7\leq\pi(K_4^{(3)-})\leq 0.2871$,
where the upper bounds were obtained
using Razborov's flag algebra method~\cite{BabT11, Raz10}.

It is worth noting that for many hypergraphs $F$,
the conjectured extremal constructions
of $F$-free hypergraphs $H$
have edges very unevenly distributed.
For example, in the case $F=K_4^{(3)}$,
in the original $F$-free construction with edge density $5/9$
due to Tur\'an~\cite{Tur61},
the vertex set can be split 
into three independent sets of size $n/3$.
This motivated Erd\H{o}s and S\'os~\cite{ErdS82}
to introduce Tur\'an problems
with an additional edge distribution condition.

\begin{definition}
A $3$-graph $H$ is said to be \emph{$(d,\varepsilon,\points)$-dense}
if any subset $S\subseteq V(H)$ contains at least $d\binom{|S|}{3}-\varepsilon|V(H)|^3$ edges. 
The \emph{uniform Tur\'an density} $\pu(F)$ of a $3$-graph $F$
is defined as the infimum of the values of $d$, 
for which there exists $\varepsilon>0$ and $N$ such that 
every $(d,\varepsilon,\points)$-dense hypergraph
on at least $N$ vertices
contains $F$ as a subgraph.
\end{definition}

In~\cite{Erd90}, Erd\H{o}s conjectured
that $\pu(K_4^{(3)})=1/2$ and $\pu(K_4^{(3)-})=1/4$. 
While the former remains open,
Glebov, Kr\'al' and Volec~\cite{GleKV16}
gave a computer-assisted proof of the latter conjecture,
which was then proved combinatorially by
Reiher, R\"odl and Schacht~\cite{ReiRS18a}.

We now briefly survey recent results
on exact values of the uniform Tur\'an densities of $3$-graphs.
Reiher, R\"odl and Schacht~\cite{ReiRS18}
characterized $3$-graphs $F$ with $\pu(F)=0$.
As a consequence of this characterization,
they deduced that
every $3$-graph $F$ with non-zero uniform Tur\'an density
satisfies $\pu(F)\geq 1/27$.
In other words, there is a ``jump'' phenomenon:
$\pu(F)$ does not take values in $(0, 1/27)$.
Garbe, Kr\'al' and the author~\cite{GarKL24}
constructed $3$-graphs with uniform Tur\'an density $1/27$.
Other classes of $3$-graphs whose uniform Tur\'an density
are known are tight cycles~\cite{BucCKMM23},
and a specific family of $3$-graphs
with uniform Tur\'an density $8/27$~\cite{GIKKL24}.
In all these cases, palette constructions,
which we introduce next, play a key role.

\subsection{Palettes}\label{sec:palettes}

The tight lower bounds for all known values
of the uniform Tur\'an densities of $3$-graphs
arise from palette constructions.
This concept was introduced by Reiher~\cite{Rei20}, 
extending a construction by R\"odl~\cite{Rod86}.

\begin{definition}\label{def:palette}
A \emph{palette} $\cp$ is a pair $(\mathcal{C},\mathcal{A})$, 
where $\mathcal{C}$ is a finite set (whose elements we call \emph{colors}) 
and a set of (ordered) triples of colors $\mathcal{A}\subseteq \mathcal{C}^3$, 
which we call the \emph{admissible triples}.
The density of $\cp$ is $d(\cp):=|\mathcal{A}|/|\mathcal{C}|^3$.

We say that a $3$-graph $F$ \emph{admits} a palette $\cp$
if there exists an order $\preceq$ on $V(F)$
and a function $\varphi:\binom{V(F)}{2}\rightarrow \mathcal{C}$
such that for every edge $uvw\in E(F)$ with $u\prec v\prec w$
we have $\left(\varphi(uv), \varphi(uw), \varphi(vw)\right)\in \mathcal{A}$.
\end{definition}

Palettes can be used to obtain lower bounds on uniform Tur\'an densities.
Specifically, if $F$ does not admit 
a palette $\cp$, then $\pu(F)\geq d(\cp)$.
The reason is that $\cp$ can be used
to generate a $(d(\cp), o(1), \points)$-dense
$F$-free $3$-graph $H_n$ with $n$ vertices.
To construct the hypergraph $H_n$,
proceed as follows:
the vertex set of $H_n$ is $[n]$.
Randomly color the edges of the complete ($2$-)graph $K_n$
with the colors from $\mathcal{C}$.
Now a triple of vertices $u<v<w$
is an edge in $H_n$
if the ordered triple of colors assigned to $uv$, $uw$ and $vw$
(in this order) belongs to $\mathcal{A}$.
In addition to lower bounds, in some cases 
palettes can be used to give characterizations of upper bounds~\cite{GarKL24, LiLWZ23, GIKKL24}.
Most notably, Reiher, R\"{o}dl and Schacht~\cite{ReiRS18}
proved that a $3$-graph $F$ has $\pu(F)=0$ if and only if
$F$ admits the three-color palette with 
$\mathcal{A}=\{(\text{red, green, blue})\}$.

All the lower bound constructions
for the tight results on uniform Tur\'an density mentioned above
are derived from palettes via this procedure.
The same applies to the conjectured optimal constructions
for other families of $3$-graphs, 
including complete graphs and stars~\cite{Rei20}.
Our main result asserts that this is a general phenomenon.
In particular, we show that $\pu(F)=\pa(F)$
for every $3$-graph $F$,
where $\pa$ is defined as follows.

\begin{definition}
 The \emph{palette Tur\'an density} of a 3-graph $F$ is
 
 \[\pa(F):=\sup\{d(\cp):\cp\text{ palette, }F\text{ does not admit }\cp\}.\]
\end{definition}


\begin{theorem}\label{thm:palette}
For every $3$-graph $F$, we have $\pu(F)=\pa(F)$.
\end{theorem}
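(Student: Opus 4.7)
The direction $\pu(F) \geq \pa(F)$ is the standard palette lower bound already reviewed in Section~\ref{sec:palettes}: given any palette $\cp$ not admitted by $F$, assigning each pair in $\binom{[n]}{2}$ an independent uniform color from $\mathcal{C}$ and declaring a triple $u < v < w$ an edge when the ordered triple of pair-colors lies in $\mathcal{A}$ produces a $(d(\cp), o(1), \points)$-dense $F$-free $3$-graph with high probability. Hence $\pu(F) \geq d(\cp)$ for every such $\cp$, so the supremum gives $\pu(F) \geq \pa(F)$.

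For the reverse inequality I would argue by contradiction. Fix $d > \pa(F)$ and suppose there were a sequence of $F$-free, $(d, \varepsilon_n, \points)$-dense $3$-graphs $H_n$ with $\varepsilon_n \to 0$ and $n \to \infty$. The plan is to extract from $H_n$ (for $n$ large) a palette $\cp_{H_n}$ of density at least $d - o(1)$ that is not admitted by $F$; this would yield $\pa(F) \geq d - o(1) > \pa(F)$, a contradiction.

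The natural source of colors is the ``local type'' of a pair of vertices. Fix a large integer $M$ and sample a uniformly random ordered tuple $S = (s_1, \ldots, s_M) \in V(H_n)^M$; assign to each unordered pair $\{u,v\}$ its signature $\sigma(uv) \in \{0,1\}^M$ recording which indices $i$ satisfy $u v s_i \in E(H_n)$. After an appropriate quantization of signatures into a bounded color set $\mathcal{C}$ (with colors duplicated according to their pair-probabilities, so as to emulate a weighted palette), declare a color triple $(c_1, c_2, c_3)$ admissible when, among triples $u < v < w$ whose pair-signatures fall in $c_1, c_2, c_3$, a constant fraction lie in $E(H_n)$. The $(d, \varepsilon_n, \points)$-density, together with a concentration estimate over the random sample $S$, should force $d(\cp_{H_n}) \geq d - o(1)$. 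The crux is to show $F$ does not admit $\cp_{H_n}$: if it did, via an order $\preceq$ on $V(F)$ and a pair-coloring $\varphi$, I would embed $V(F)$ greedily into $V(H_n)$ in the order $\preceq$, at each step using the admissibility of the color triples of each edge completed at that step to guarantee many valid images for the new vertex. Since signatures genuinely encode co-neighborhood information in $H_n$, admissibility translates into honest codegree lower bounds, and the $(d, \varepsilon_n, \points)$-density propagates the greedy step through all $|V(F)|$ vertices, producing a copy of $F$ in $H_n$ that contradicts $F$-freeness.

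The main obstacle is executing the greedy embedding cleanly. Each new vertex of $F$ must simultaneously satisfy a signature-class condition for every prior vertex and an edge-admissibility condition for every edge of $F$ completed at the current step, while the candidate pool has already been restricted by all prior such conditions. Calibrating the quantization of signatures and the admissibility threshold so that these conditions remain mutually compatible, while keeping $d(\cp_{H_n})$ near $d$, is the most delicate balance. I expect this to require iteratively applying $(d, \varepsilon_n, \points)$-density to subsets carved out by prior signature restrictions, together with concentration bounds ensuring that signatures are robustly representative over $S$; no hypergraph regularity should be needed, in line with the paper's stated aim.
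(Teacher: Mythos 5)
Your first direction is fine and matches the paper. The problem is the second direction, and specifically the step you yourself flag as the ``main obstacle'': showing that if $F$ admitted the signature palette $\cp_{H_n}$, then a greedy embedding would produce a copy of $F$ in $H_n$. Admissibility of a colour triple in your construction is an \emph{averaged} statement: among all triples of vertices whose pair-signatures land in the prescribed classes, a constant fraction are edges. This does not localize to the specific pairs you have already committed to in the greedy process. The $(d,\varepsilon,\points)$-density hypothesis controls edge counts only on vertex subsets $S\subseteq V(H)$, never on subsets of $\binom{V(H)}{2}$, so inside a single signature box the edges may concentrate on a sparse set of pairs: a pair with the ``right'' signature can have empty link into the relevant box even though the box as a whole has constant density. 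Consequently, as soon as $F$ has two edges sharing a pair (e.g.\ $K_4^{(3)-}$), the previously chosen pair may lie in the bad fraction and the greedy step has no valid continuation; the concentration bounds over the random sample $S$ do not help, because the failure is structural, not a sampling error. Your claim that ``admissibility translates into honest codegree lower bounds'' is exactly the unproved point, and overcoming it is precisely what the embedding/counting machinery of the hypergraph regularity method exists for. A direct, regularity-free extraction of a palette from a $(d,\varepsilon,\points)$-dense host, with a working embedding statement, would be a substantially stronger result than what is known.

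The paper avoids this entirely by not working with the host hypergraph at all: it invokes Proposition~\ref{prop:reiher} (Reiher's reduced-hypergraph characterization of $\pu$, which packages the regularity method, including the embedding step) and then operates purely at the level of $N$-reduced hypergraphs. There the work is combinatorial: Lemma~\ref{lem:lowres} shrinks each vertex class to a multiset of size $s=s(\varepsilon)$ --- crucially independent of the number $m$ of surviving indices, via the round-based sampling of Lemma~\ref{lem:selection} and Hoeffding --- and a final application of Ramsey's theorem makes all constituents identical, yielding a genuine palette $\cp$ with $\cp[n]$ not embedding $F$, hence (Lemma~\ref{lem:embedding}) not admitted by $F$. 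So the statement ``no hypergraph regularity should be needed'' is where your plan diverges from what can currently be proved; the paper needs regularity exactly once, hidden inside Proposition~\ref{prop:reiher}, and your sketch has no substitute for it.
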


Theorem~\ref{thm:palette} answers a question,
which was circulating in the community and is
explicitly discussed in the survey 
by Reiher~\cite[Section 3]{Rei20}.
An important impact of Theorem~\ref{thm:palette}
is that computing $\pa(F)$
is generally easier than computing $\pu(F)$.
As one instance of this,
we will give a short proof of $\pu(K_4^{(3)-})=1/4$ in Section~\ref{sec:applications}.
Another example showcasing this phenomenon is the cycle $C_\ell^{(3)}$.
In his Master's thesis, Cooper~\cite{Coo18}
proved in 2018 that $\pa(C_5^{(3)})=4/27$.
Using blow-ups, one can easily derive that
$\pa(C_\ell^{(3)})=4/27$ holds for every $\ell\geq 8$ not divisible by 3.
It took three years and a considerable amount of effort
to prove that $\pu(C_\ell^{(3)})=4/27$.
A side-by-side comparison of the proofs
reveals the additional complexity
that the study of $\pu$ presents with respect to $\pa$.
Therefore, Theorem~\ref{thm:palette}
can act as a ``black box'' to translate (generally simpler)
palette proofs into the setting of uniform Tur\'an density.

We now discuss specific new applications of Theorem~\ref{thm:palette}.
The power of Theorem~\ref{thm:palette}
is shown in the follow-up paper~\cite{LamW24},
where Wu and the author use the theorem
to determine the uniform Tur\'an density of large stars.
As we mentioned, $0$, $1/27$, $4/27$, $1/4$
and $8/27$ are in fact
the only known values of $\pu$.
In this paper, we use Theorem~\ref{thm:palette}
to find an infinite sequence of values of $\pu$:

\begin{theorem}\label{thm:inffamily}
For every $k\geq 2$ there exists a $3$-graph $F$ with $\pu(F)=\frac12-\frac{1}{2k}$.
\end{theorem}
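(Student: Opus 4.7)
By Theorem~\ref{thm:palette}, $\pu(F) = \pa(F)$ for every 3-graph $F$, so it suffices to exhibit, for each $k \geq 2$, a 3-graph $F_k$ with $\pa(F_k) = \frac{k-1}{2k}$. The plan is to pair a lower bound (from a specific palette not admitted by $F_k$) with an upper bound (every denser palette admits $F_k$).

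For the lower bound, consider the palette $\cp^*_k := ([k], \{(a,b,c) \in [k]^3 : a < c\})$, whose density is $\binom{k}{2} \cdot k / k^3 = \frac{k-1}{2k}$. Unwinding definitions, a 3-graph $F$ admits $\cp^*_k$ under a fixed linear ordering $\prec$ of $V(F)$ if and only if the longest tight path in $F$ compatible with $\prec$ (meaning a sequence $v_0 \prec v_1 \prec \cdots \prec v_{\ell+1}$ with $\{v_i, v_{i+1}, v_{i+2}\} \in E(F)$ for each $0 \leq i \leq \ell-1$) has length at most $k-1$: any valid coloring $\varphi$ is forced to be strictly increasing along such a chain, and only $k$ colors are available. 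Hence $F$ fails to admit $\cp^*_k$ precisely when every linear ordering of $V(F)$ contains a tight path of length at least $k$. I would choose $F_k$ to satisfy this property, yielding $\pa(F_k) \geq \frac{k-1}{2k}$.

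For the upper bound $\pa(F_k) \leq \frac{k-1}{2k}$, I would show that every palette $\cp$ with $d(\cp) > \frac{k-1}{2k}$ admits $F_k$. The strategy is to analyze the structure of such $\cp$ (e.g., by isolating a dense substructure of $\mathcal{A}$, extracting an embedded copy of $\cp^*_{k+1}$, or producing a long ``chain'' of admissible triples) and from it construct an explicit ordering of $V(F_k)$ together with an edge-coloring of $K_{V(F_k)}$ making every hyperedge admissible. The base case $k=2$ recovers $\pu(K_4^{(3)-}) = 1/4$ (Glebov--Kr\'al'--Volec and Reiher--R\"odl--Schacht), so $F_2 = K_4^{(3)-}$ works, and one expects $F_k$ for larger $k$ to be built iteratively from $K_4^{(3)-}$ (e.g., via blow-ups or a gluing construction along shared substructures).

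The main obstacle is striking the right balance in the choice of $F_k$: it must be rich enough to force a $k$-tight path in every ordering (so that it does not admit $\cp^*_k$), yet ``simple'' enough to be embedded into every palette of density strictly exceeding $\frac{k-1}{2k}$. The naive choice $F_k = K_{k+2}^{(3)}$ is too restrictive for the upper bound already at $k=2$ (since $\pu(K_4^{(3)})$ is conjectured to be $1/2$, not $1/4$), so $F_k$ must be a carefully chosen subgraph. Proving the upper bound thus requires a structural understanding of all palettes just above the $\frac{k-1}{2k}$ threshold and a matching embedding argument for $F_k$, and this is the technical heart of the proof.
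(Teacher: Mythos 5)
Your reduction via Theorem~\ref{thm:palette} and your lower-bound palette $([k],\{(a,b,c):a<c\})$ of density $\frac12-\frac1{2k}$ coincide with the paper's, and your observation that failing to admit this palette amounts to forcing, in every linear order on $V(F)$, a monotone tight path with $k$ edges (hence $k+1$ strictly increasing colors) is the right target property. But the proposal stops exactly where the proof begins: no $F_k$ is ever constructed, and the upper bound is only described as a goal. These are not routine details. The paper needs two concrete ingredients that are absent here. First, Claim~\ref{claim:path}: every palette of density greater than $\frac12-\frac1{2k}$ contains a chain of admissible triples $(a_1,b_1,a_2),(a_2,b_2,a_3),\dots,(a_k,b_k,a_{k+1})$; this is proved by passing to the digraph with an arc $\vec{uv}$ whenever $(u,w,v)\in\mathcal{A}$ for some $w$, layering vertices by longest walk, and counting. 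You gesture at ``producing a long chain'' but give no argument. Second, and more importantly, the construction of $F_k$: the paper builds (Lemma~\ref{lem:monotone}, probabilistically, via Erd\H{o}s--Szekeres and deletion of overlapping edges) a \emph{linear} $(k+2)$-uniform hypergraph $H$ on $[n]$ such that every permutation of $[n]$ is monotone on some edge of $H$, and defines $F_k$ by placing the tight path $v_iv_{i+1}v_{i+2}$, $i\in[k]$, inside each edge $v_1<\dots<v_{k+2}$ of $H$. The monotone-edge property of $H$ yields the lower bound, while \emph{linearity} is precisely what resolves the tension you flag: the chain from Claim~\ref{claim:path} colors the pairs inside each edge of $H$ independently, with no two edges of $H$ sharing a pair, so $F_k$ admits every palette above the threshold.

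Your tentative suggestions for $F_k$ (blow-ups of $K_4^{(3)-}$, gluings) are unsubstantiated and there is no indication they would simultaneously force a monotone tight $k$-path in every ordering and remain embeddable in all palettes of density just above $\frac12-\frac1{2k}$; you yourself label this the ``technical heart'' and leave it open. As written, the proposal proves neither $\pa(F_k)\ge\frac12-\frac1{2k}$ for a concrete $F_k$ nor the matching upper bound, so the theorem is not established.
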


Theorem~\ref{thm:inffamily} implies in particular that $1/2$
is an accumulation point for the values of $\pu$.
We remark that a recent result of Conlon and Sch\"ulke~\cite{ConS24}
shows that $1/2$ is an accumulation point
in the setting of Tur\'an density,
i.e., for the values of $\pi$.
Unlike in~\cite{ConS24}, we compute the sequence
of values of $\pu$ that converges to $1/2$ explicitly.

The parameter $\pu$ is not the only variant of Tur\'an density for which
Reiher suggested a connection to palettes in~\cite[Section 3]{Rei20}.
There are two additional variants,
introduced by Reiher, R\"odl and Schacht in \cite{ReiRS16, ReiRS18b}
and denoted by $\pve$ and $\pch$.
In Section~\ref{sec:pve} we will introduce $\pve$ and
we will prove an analogue of Theorem~\ref{thm:palette} for this parameter,
and in Section~\ref{sec:pve0} we will use it
to characterize $3$-graphs $F$ with $\pve(F)=0$.
We remark that such a characterization 
was also announced by Reiher, R\"{o}dl and Schacht.
In Section~\ref{sec:remarks} we will discuss
possible extensions to $\pch$.

\subsection{Reduced hypergraphs}

The most important tool
in the proof of Theorem~\ref{thm:palette}
is the concept of reduced hypergraph,
which was formally introduced in~\cite{Rei20}
and which we now present.

\begin{definition}
Let $N$ be a positive integer. 
An \emph{$N$-reduced hypergraph} is composed of a 3-graph $H$
together with a set of $N$ indices $U$.
$V(H)$ is the disjoint union of
 ${N \choose 2}$ vertex sets $V_{\alpha,\beta}$,
with $\{\alpha,\beta\}\in {U \choose 2}$,
and the edge set of $H$ is the union of 
${N \choose 3}$ tripartite graphs $A_{\alpha,\beta,\gamma}$
on $V_{\alpha, \beta},V_{\alpha, \gamma}, V_{\beta, \gamma}$
for $\{\alpha, \beta, \gamma\}\in {U \choose 3}$,
called its \emph{constituents}.
Note that we will treat the subindices
as unordered sets,
so $V_{\alpha, \beta}=V_{\beta, \alpha}$.
We say that $H$ has density at least $d$
if for all triples $\{\alpha,\beta,\gamma\}\in {U \choose 3}$
the constituent $A_{\alpha,\beta,\gamma}$
has at least $d|V_{\alpha, \beta}||V_{\alpha, \gamma}||V_{\beta, \gamma}|$ edges.
\end{definition}

It is useful (and not far from the reality)
to think of $N$-reduced hypergraphs
as a result of applying the hypergraph regularity lemma
to a large host $3$-graph.
We now cast a way that a graph $F$
is deduced to exist in the host hypergraph
from its regularity partition
in the setting of reduced hypergraphs.

\begin{definition}
Given a 3-graph $F$ and an $N$-reduced hypergraph $H$,
we say that $H$ \emph{embeds} $F$
if there exists an injective function $\tau:V(F)\rightarrow U$
and a function $\varphi:{V(F)\choose 2}\rightarrow V(H)$
such that for every $v,w\in V(F)$
we have $\varphi(uv)\in V_{\tau(u), \tau(v)}$,
and for every $uvw\in E(H)$
we have that $\varphi(uv)\varphi(uw)\varphi(vw)\in E(H)$.
\end{definition}

The regularity lemma produces
an equipartition of the vertex set $V(H')$
of a host hypergraph $H'$ into a bounded number of classes,
and a partition of ${V(H') \choose 2}$ as well.
Here, the set of indices $U$
roughly corresponds to the family of vertex classes,
and each vertex in $H$ corresponds
to a subset of ${V(H') \choose 2}$.
If $H'$ is $(d, o(1), \points)$-dense,
then after performing some clean-up,
the resulting reduced hypergraph $H$
will have density
at least $d-o(1)$.

This relation between reduced hypergraphs and the regularity lemma
is behind the following result of Reiher,
which characterizes $\pu$
in terms of reduced hypergraphs.

\begin{proposition}[{\cite[Theorem 3.3]{Rei20}}]
\label{prop:reiher}
Let $F$ be a $3$-graph. Then $\pu(F)$ is the supremum of the values of $d$ such that, for every $N$,
there exists an $N$-reduced hypergraph with density at least $d$ which does not embed $F$.
\end{proposition}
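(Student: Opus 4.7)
The proposition is an equivalence, so the plan is to establish two matching inequalities. For the lower bound $\pu(F)\geq\sup\{\cdots\}$, I would convert a family of witnessing reduced hypergraphs into a sequence of dense $F$-free host $3$-graphs via a random blow-up. For the matching upper bound, I would extract a reduced hypergraph from any sufficiently dense $F$-free host using the hypergraph regularity method.

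In the lower bound direction, fix $d'<\sup\{\cdots\}$ and, for every $N$ large in terms of $|V(F)|$, a witnessing $N$-reduced hypergraph $H_N$ of density at least $d'$ that does not embed $F$. Partition $[n]$ into $N$ equal-sized classes $X_1,\dots,X_N$, and for each pair $\{x,y\}$ with $x\in X_\alpha$, $y\in X_\beta$, $\alpha\neq\beta$, sample independently a uniformly random $\varphi(xy)\in V_{\alpha,\beta}$. Declare $\{x,y,z\}$ an edge of the host $3$-graph $H'_n$ precisely when $x,y,z$ lie in three distinct classes and $(\varphi(xy),\varphi(xz),\varphi(yz))$ belongs to the corresponding constituent. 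A first-moment computation gives that, for every $S\subseteq[n]$, the expected number of edges inside $S$ is at least $d'\binom{|S|}{3}-O(n^3/N)$, and a martingale-type concentration argument with a union bound over subsets of size at least $\varepsilon n$ shows that $H'_n$ is $(d'-o(1),o(1),\points)$-dense with positive probability. For $F$-freeness, any copy of $F$ in $H'_n$ yields a class-assignment $\tau$ that is injective on every edge of $F$; a supersaturation-type argument, using the density of $H_N$ and the hypothesis that $H_N$ does not embed $F$ globally, allows one to refine $\tau$ to a globally injective map furnishing an actual embedding of $F$ into $H_N$, contradicting the choice of $H_N$.

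In the upper bound direction, let $H'$ be an $F$-free $(d,o(1),\points)$-dense $3$-graph on many vertices. Apply the hypergraph regularity lemma (in the Gowers or Nagle--R\"odl--Schacht form) to obtain an equipartition of $V(H')$ into $N$ parts and, for each pair of parts, a partition into regular bipartite graphs of bounded complexity. Taking these bipartite graphs as the sets $V_{\alpha,\beta}$ and the regular and sufficiently dense triads as the constituents $A_{\alpha,\beta,\gamma}$ yields an $N$-reduced hypergraph $H$. The uniformity hypothesis on $H'$ forces essentially every triad (after discarding a negligible fraction) to have density at least $d-o(1)$, giving density at least $d-o(1)$ for $H$; by the counting lemma, any embedding of $F$ into $H$ would produce many copies of $F$ in $H'$, so $F$-freeness transfers. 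The main obstacle lies in this second direction, where tracking the density bounds and regularity parameters simultaneously through the hypergraph regularity partition, and then applying the matching counting lemma, requires the full strength of the hypergraph regularity method; by contrast, the lower bound direction is a random blow-up whose only subtle point is the $F$-freeness argument for class-assignments that are not a priori globally injective.
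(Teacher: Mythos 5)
First, note that the paper does not prove this proposition at all: it is quoted verbatim from Reiher's survey \cite[Theorem~3.3]{Rei20}, so there is no internal argument to compare against. Your overall two-directional outline (random construction from a witnessing reduced hypergraph for the lower bound, hypergraph regularity plus an embedding/counting lemma for the upper bound) is indeed the shape of Reiher's proof, and the upper-bound half is exactly the part for which the regularity method is unavoidable. One caveat there: since the definition demands that \emph{every} constituent have density at least $d-o(1)$, ``discarding a negligible fraction'' of triads is not enough; one has to prune indices or bipartite pieces so that all surviving constituents are dense, which is part of the clean-up Reiher carries out.

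The genuine gap is in your lower-bound construction. Partitioning $[n]$ into $N$ classes of size $n/N$ with $n\gg N$ only guarantees that a copy of $F$ in the random host yields a class map $\tau$ that is injective on each edge of $F$, not globally injective, whereas ``embeds'' requires an injective $\tau$. Your proposed fix --- a ``supersaturation-type argument'' refining $\tau$ to a globally injective map --- does not work as stated: the hypothesis that every constituent of $H_N$ has density at least $d'$ gives no way to re-route the pairs incident to a repeated index so that all incident edges of $F$ land in the corresponding constituents (if positive constituent density alone allowed such re-embedding, dense reduced hypergraphs would embed far too much and the whole problem would trivialize). The standard remedy, and the reason the proposition quantifies over \emph{all} $N$, is to take $N=n$ with one index per host vertex: assign to each pair $\{i,j\}\in\binom{[n]}{2}$ a uniformly random element of $V_{i,j}$ and declare $i<j<k$ an edge when the three chosen vertices form an edge of $A_{i,j,k}$. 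Then distinct vertices of a copy of $F$ are automatically distinct indices, so $F$-freeness of the host follows directly from the assumption that $H_n$ does not embed $F$, and your expectation-plus-McDiarmid argument (union bound over the $2^n$ vertex subsets) gives $(d'-o(1),o(1),\points)$-density; this is precisely the palette-style construction described in Section~\ref{sec:palettes}, with the reduced hypergraph playing the role of a ``non-uniform palette.''
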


\subsection{Structure of the paper}

In Section~\ref{sec:structure} we will present 
an overview of the proof of Theorem~\ref{thm:palette}. 
We will introduce the probabilistic and Ramsey-theoretic 
tools needed for our proof in Section~\ref{sec:prelim}. 
Then Section~\ref{sec:mainproof} is devoted 
to proving Theorem~\ref{thm:palette}. 
In Section~\ref{sec:pve} we will define the parameter $\pve$,
and we will prove an analogous result to Theorem~\ref{thm:palette},
which we present as Theorem~\ref{thm:pve}.
To demonstrate the range of applications of our main theorems, 
in Section~\ref{sec:applications} we will 
give a short proof that $\pu(K_4^{(3)-})=1/4$, we derive 
Theorem~\ref{thm:inffamily} on the existence of
hypergraphs with uniform Tur\'an density
equal to $\frac12-\frac{1}{2k}$, 
and we characterize 
the $3$-graphs $F$ with $\pve(F)=0$. 
In Section~\ref{sec:remarks} 
we will explain the difficulty of applying 
the same method to $\pch$, 
and finally in Section~\ref{sec:finremarks} 
we discuss in which cases does there exist
a $3$-graph $F$ admitting some palettes
$\cp_1, \cp_2, \dots, \cp_m$
while not admitting other palettes
$\mathcal{Q}_1,\mathcal{Q}_2, \dots, \mathcal{Q}_n$.

\section{Structure of the proof}\label{sec:structure}

At its core, the proof strategy 
for Theorem~\ref{thm:palette} goes as follows.
We start with a 3-graph $F$
with $\pu(F)=\pi$ and $\varepsilon>0$.
Our goal is to construct a palette $\cp$
with density at least $\pi-\varepsilon$
not admitted by $F$.

We note that, given a palette $\cp=(\mathcal{C}, \mathcal{A})$ 
with density $d$ and a natural number $n$,
we can construct an $n$-reduced hypergraph $\cp[n]$ with density $d$,
by taking each set $V_{\alpha,\beta}$ to be a copy of $\mathcal{C}$,
and for every triple of indices $1\leq\alpha<\beta<\gamma\leq n$
letting the constituent $A_{\alpha, \beta, \gamma}$
be a copy of $\mathcal{A}$,
where for every triple $(c_1, c_2, c_3)\in\mathcal{A}$
we take an edge with $c_1$ in $V_{\alpha, \beta}$, 
$c_2$ in $V_{\alpha, \gamma}$ and $c_3$ in $V_{\beta, \gamma}$.
Moreover, when $n=|V(F)|$, we have that
$\cp[n]$ embeds $F$ if and only if $F$ admits the palette $\cp$
(see Lemma~\ref{lem:embedding}).

Given $F$ and $\varepsilon$, we can use Proposition~\ref{prop:reiher}
to find an $N$-reduced hypergraph $H$
with density at least $\pi-\varepsilon/2$,
for $N$ arbitrarily large, which does not embed $F$.
We will find a subgraph in $H$ of the form $\cp[n]$,
for some palette $\cp$ of density at least $\pi-\varepsilon$.
Since $H$ does not embed $F$, neither does $\cp[n]$,
and so $F$ does not admit $\cp$.

The key step in the proof is the following lemma:

\begin{lemma}\label{lem:lowres}
For all $\varepsilon>0$ there exist $s=s(\varepsilon)$,
such that for all $m$ there exists $N=N(m,\varepsilon)$ 
for which the following holds: 
if $H$ is an $N$-reduced hypergraph on index set $[N]$ 
with density at least $d$, 
there exists a subset $U\subseteq [N]$ of $m$ indices, 
and for each $\alpha,\beta\in U$ there exists 
a multiset $S_{\alpha,\beta}$ of $s$ vertices in $V_{\alpha,\beta}$,
such that the $m$-reduced hypergraph induced by $H$
on the sets $S_{\alpha,\beta}$ has density at least $d-\varepsilon$.

\end{lemma}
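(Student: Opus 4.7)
My plan combines random sampling with a probabilistic independent-set argument. First, fix $s = s(\varepsilon)$ large enough that, for any tripartite graph of edge density $\rho$, independent uniform random multisets of size $s$ from each part give an induced density within $\varepsilon/2$ of $\rho$ with probability at least $1 - \delta$, for some small $\delta = \delta(\varepsilon) > 0$; this single-triple concentration follows from McDiarmid's inequality applied to the number of sampled edges, since changing one sampled vertex alters the count by at most $s^2$. For the input hypergraph $H$, sample $S_{\alpha,\beta} \subseteq V_{\alpha,\beta}$ of size $s$ uniformly and independently for every pair $\{\alpha, \beta\} \in \binom{[N]}{2}$, and call a triple $\{\alpha, \beta, \gamma\}$ \emph{bad} if the sampled density on $S_{\alpha, \beta} \times S_{\alpha, \gamma} \times S_{\beta, \gamma}$ is below $d - \varepsilon/2$. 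Each triple is bad with probability at most $\delta$; crucially, the bad events for two triples depend on disjoint samples — and are thus independent — unless the triples share a pair of indices.

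The task then reduces to finding $U \subseteq [N]$ of size $m$ with no bad triple in $\binom{U}{3}$, i.e., an $m$-independent set in the random ``bad hypergraph''. I would apply an appropriate correlation inequality — a Harris--FKG- or Janson-/Suen-type bound exploiting that only pair-sharing triples have correlated bad events — to show that, for a random $U$ of size $m$, the probability of being bad-free is lower bounded by a positive quantity depending only on $\delta(\varepsilon)$ and $m$ (roughly $\exp(-C\delta m^3)$). Combining with the $\binom{N}{m}$ choices of $U$, taking $N = N(m, \varepsilon)$ sufficiently large — of the form $m \cdot \exp(C \delta(\varepsilon) m^2)$, say — ensures the existence of such a $U$, and fixing it yields the desired $m$-reduced sub-hypergraph with every constituent of density at least $d - \varepsilon/2 \geq d - \varepsilon$.

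The main obstacle I anticipate is precisely this correlation analysis: a naive union bound over the $\binom{m}{3}$ triples inside $U$ would demand $\delta < 1/\binom{m}{3}$ and force $s$ to depend on $m$, violating the statement. Exploiting the near-independence of bad events — only triples sharing a pair are correlated, forming a sparse dependency structure — so that the $m$-dependence is absorbed into $N$ rather than $s$, is the technically delicate heart of the proof.
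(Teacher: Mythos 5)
Your diagnosis of where the difficulty lies is accurate, but the step you defer to a correlation inequality is exactly the gap, and none of the tools you name can close it. The good events ``sampled density of the triple $\{\alpha,\beta,\gamma\}$ is at least $d-\varepsilon/2$'' are not monotone with respect to any product structure on the samples, so Harris--FKG does not apply; Janson's inequality is tailored to events of the form ``a prescribed subset appears in a binomial random set'' and bounds the probability that \emph{none} of them occur from above in a different setting, while Suen-type inequalities likewise do not yield the lower bound you need for general events with a dependency graph. The one standard tool that does give a positive lower bound for avoiding all bad events, the Lov\'asz Local Lemma, requires $\delta\cdot(\text{dependency degree})=O(1)$; here each triple shares a pair with $\Theta(m)$ other triples, so you would need $\delta\lesssim 1/m$ and hence $s\gtrsim \varepsilon^{-2}\log m$ -- precisely the $m$-dependence of $s$ that the statement forbids and that you set out to avoid. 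Note also an internal inconsistency: if you could really prove $\Pr[\text{a fixed }U\text{ of size }m\text{ is bad-free}]\geq \exp(-C\delta m^3)>0$, then existence would follow immediately for $N=m$, and the large-$N$ step would be vacuous; conversely, taking $N$ large does not rescue a per-$U$ bound you cannot establish, since nothing in the proposal extracts additional independence across the $\binom{N}{m}$ choices of $U$ (beyond disjoint ones, which again only helps if the per-$U$ probability is already positive). Indeed it is entirely possible, for a fixed $U$ with $\delta\binom{m}{3}\gg 1$, that \emph{every} realization of a one-shot bounded-size sample produces some bad triple, so a single-round random choice plus correlation inequalities is not merely technically awkward but the wrong mechanism.

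The paper resolves this differently, and the difference is essential: the largeness of $N$ is spent not on a union bound over sample realizations but on a Ramsey-type selection lemma (Lemma~\ref{lem:selection}) that, round by round, lets one pick the samples while simultaneously avoiding the ``bad'' sets for all surviving index pairs, at the cost of shrinking the index set $[N]\supseteq U_1\supseteq\dots\supseteq U_r$ down to $m$ indices. Moreover, $S_{\alpha,\beta}$ is not one random $s$-set but a sum of $r$ multisets of size $t$ chosen in $r=\lceil 6\varepsilon^{-1}\rceil$ rounds, and the invariants only control the boxes $T^{(i)}_{\alpha,\beta}\times T^{(j)}_{\alpha,\gamma}\times T^{(k)}_{\beta,\gamma}$ with three \emph{distinct} rounds $i<j<k$; the uncontrollable same-round boxes are simply absorbed into the $\varepsilon$-loss because they occupy an $O(1/r)$ fraction of $S_{\alpha,\beta}\times S_{\alpha,\gamma}\times S_{\beta,\gamma}$. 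This two-pronged device (iterative sampling so that each new choice is tested only against already-fixed sets, plus Ramsey to make all index pairs succeed at once) is what decouples $s$ from $m$, and it has no counterpart in your proposal. To repair your argument you would need to replace the correlation-inequality step by some mechanism of this kind; as written, the claimed bound $\exp(-C\delta m^3)$ is unsupported and the proof does not go through.
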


The intuition behind this lemma is as follows. 
In the $N$-reduced hypergraph $H$, 
each vertex set $V_{\alpha,\beta}$ could be arbitrarily large, 
and as such the constituents $A_{\alpha,\beta,\gamma}$
could be arbitrarily complex. 
We want to decrease this complexity by reducing each set $V_{\alpha,\beta}$ 
into a subset with a bounded number of vertices, 
at the cost of decreasing the number of indices
from $N$ to $m$
and decreasing the density of $H$ by $\varepsilon$. 
If $s$ is allowed to depend on $\varepsilon$ and $m$, 
then it is not too hard to prove that a random choice of multisets $S_{\alpha,\beta}$
will succeed with high probability for large enough $s$.
The crucial point of Lemma~\ref{lem:lowres}
is that $s$ does not depend on $m$, only on $\varepsilon$.
Lemma~\ref{lem:lowres} is proved through a suitable 
combination of random vertex selections and applications of Ramsey's theorem.
The probabilistic and Ramsey-theoretic tools necessary
will be introduced in Section~\ref{sec:prelim}.

After applying Lemma~\ref{lem:lowres} and obtaining 
an $m$-reduced hypergraph $H'$ of density at least $\pi-\varepsilon$
where each part $S_{\alpha,\beta}$ has exactly $s$ vertices,
identify the elements of each vertex set with $[s]$ arbitrarily.
The resulting constituents $A'_{\alpha, \beta, \gamma}$ on $S_{\alpha,\beta}\times S_{\alpha,\gamma}\times S_{\beta,\gamma}$
can only be one of a bounded number of possibilities.
Applying Ramsey's theorem we find a subset of $n$ indices
in which all constituents look the same.
This produces a 3-graph of the form $\cp[n]$, as we wanted to find.

\section{Preliminaries}\label{sec:prelim}

The hardest part of the proof of Theorem~\ref{thm:palette}
is proving Lemma~\ref{lem:lowres}.
In short, we want to take the $N$-reduced hypergraph $H$
and obtain a ``low-resolution'' subgraph preserving most of the density.
The next lemma will be useful in taking such discretizations
while preserving the average value of a certain function,
which in our case will relate to the degree of vertices in certain subgraphs.

We will consider functions of the form $\mu:S\rightarrow [0,1]$.
Given a (multi)-subset $X$ of $S$, 
we will denote the average value of $\mu$ on $X$ by $\bar\mu(X)$.
The sum of two multisets $X_1+X_2$ is the multiset $X$ 
in which the multiplicity of each element $x$ 
is the sum of its multiplicities in $X_1$ and $X_2$.
We remark that $\bar\mu(X_1+X_2)\geq\min\{\bar\mu(X_1), \bar\mu(X_2)\}$.

Our main probabilistic tool will be Hoeffding's inequality. 
This inequality tells us that, if we sample a vector $X$ from $S$ uniformly at random,
which we will treat as a multiset,
the value of $\bar\mu(X)$ is very highly concentrated around $\bar\mu(S)$.

\begin{lemma}[Hoeffding's inequality]\label{lem:hoeffding}
Let $\mu:S\rightarrow [0,1]$ be a function,
 let $t$ be a positive integer and let $\varepsilon>0$. 
 Suppose that a vector $X=(x_1, x_2, \dots, x_t)$ 
 is sampled uniformly at random from $S^t$. Then

\[\Pr\left(\bar\mu(X)<\bar\mu(S)-\varepsilon\right)\leq e^{-2\varepsilon^2t}.\]

\end{lemma}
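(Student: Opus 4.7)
The plan is to follow the classical Chernoff--Hoeffding moment generating function method. Set $Y_i := \bar\mu(S) - \mu(x_i)$ for each $i \in [t]$. Since the $x_i$ are independent and uniform on $S$, the $Y_i$ are i.i.d., each has mean $0$, and each lies in an interval of length at most $1$ (because $\mu$ takes values in $[0,1]$). The event $\bar\mu(X) < \bar\mu(S) - \varepsilon$ is precisely $\sum_{i=1}^t Y_i > \varepsilon t$. For any $\lambda > 0$, Markov's inequality applied to the nonnegative variable $\exp\!\bigl(\lambda \sum_i Y_i\bigr)$ yields
\[
\Pr\!\left(\sum_{i=1}^t Y_i > \varepsilon t\right) \;\leq\; e^{-\lambda \varepsilon t}\prod_{i=1}^t \mathbb{E}\bigl[e^{\lambda Y_i}\bigr].
\]

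The core estimate I would then establish is \emph{Hoeffding's lemma}: if $Y$ is a random variable with $\mathbb{E}[Y]=0$ and $Y \in [a,b]$, then $\mathbb{E}\bigl[e^{\lambda Y}\bigr] \leq \exp\!\bigl(\lambda^2 (b-a)^2 / 8\bigr)$. The standard argument uses convexity of $y \mapsto e^{\lambda y}$ to bound $e^{\lambda Y} \leq \frac{b-Y}{b-a}e^{\lambda a} + \frac{Y-a}{b-a}e^{\lambda b}$ pointwise, takes expectations using $\mathbb{E}[Y]=0$, and then shows that the function $\varphi(\lambda) := \log\mathbb{E}[e^{\lambda Y}]$ has $\varphi(0)=\varphi'(0)=0$ and $\varphi''(\lambda) \leq (b-a)^2/4$ (the variance of a $[a,b]$-valued random variable under the exponentially tilted law is bounded by $(b-a)^2/4$). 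Taylor's theorem then gives $\varphi(\lambda) \leq \lambda^2(b-a)^2/8$.

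Applying Hoeffding's lemma to each $Y_i$ with $b-a \leq 1$ and substituting into the Chernoff bound gives
\[
\Pr\!\left(\bar\mu(X) < \bar\mu(S) - \varepsilon\right) \;\leq\; \exp\!\left(-\lambda \varepsilon t + \tfrac{\lambda^2 t}{8}\right).
\]
Optimising the exponent over $\lambda > 0$ (the minimum is attained at $\lambda = 4\varepsilon$) yields the claimed bound $e^{-2\varepsilon^2 t}$.

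The main obstacle is purely the proof of Hoeffding's lemma; the convexity bound and the second-derivative estimate on $\varphi$ are routine but require a small amount of care. Everything else is a direct application of Markov's inequality and an elementary one-variable optimisation. Since the inequality is a well-known classical result, the proof could equally be replaced by a citation to any standard probability reference.
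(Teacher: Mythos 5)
Your proof is correct: the reduction to $\sum_i Y_i > \varepsilon t$, the Chernoff bound via Markov's inequality, Hoeffding's lemma with interval length $1$, and the optimisation at $\lambda = 4\varepsilon$ all check out and give exactly the stated bound $e^{-2\varepsilon^2 t}$. The paper itself states this lemma as a classical tool without proof, and your argument is precisely the standard one that any cited reference would supply, so nothing further is needed.
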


We will also use Ramsey's theorem several times.
Ramsey's theorem states that,
for all $k,n,r$ there exists $N=R_r(n,k)$
such that whenever the edges of the
complete $r$-graph $K_N^{(r)}$ on $N$ vertices
are colored in $k$ colors,
there exists a subset of $n$ vertices
in which all edges have the same color.

The second lemma that we will use in the proof of Lemma~\ref{lem:lowres}
has to do with the way in which the vertices of $S_{\alpha.\beta}$
will be selected in the proof of the lemma.
Suppose that for each pair of indices $\alpha,\beta$ 
we want to select a vertex $v_{\alpha,\beta}\in V_{\alpha,\beta}$
while avoiding certain unlikely ``bad'' events within the constituents.
Specifically, for each index $\gamma$ different from $\alpha$ and $\beta$
we introduce a restriction for the choice of $v_{\alpha,\beta}$.
The lemma then says that there exists a subset $U$ of the indices
in which all bad events can be avoided simultaneously.

\begin{lemma}\label{lem:selection}
For every $m$ there exists $n$ with the following property.
Let $H$ be an $n$-reduced hypergraph.
Suppose that for each triple of distinct indices $\alpha,\beta,\gamma$
we have a set $B_{\alpha,\beta}^\gamma\subset V_{\alpha, \beta}$
with $|B_{\alpha,\beta}^\gamma|\leq 0.1 |V_{\alpha, \beta}|$.
Then there exists a subset of $m$ indices $U$
and for each pair $\alpha,\beta\in U$ there exists $v_{\alpha, \beta}\in V_{\alpha, \beta}$
such that for all $\alpha,\beta,\gamma\in U$
we have $v_{\alpha,\beta}\notin B_{\alpha,\beta}^\gamma$.
\end{lemma}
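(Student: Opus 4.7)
The plan is to combine random sampling of candidate vertices in each $V_{\alpha,\beta}$ with Ramsey's theorem applied to a coloring of triples by bad-set patterns. First I fix a parameter $K=K(m)$ depending only on $m$ and, for each pair $\{\alpha,\beta\}$, sample $K$ candidate vertices $v^1_{\alpha,\beta},\ldots,v^K_{\alpha,\beta}$ uniformly and independently at random from $V_{\alpha,\beta}$. For each ordered triple $(\alpha,\beta,\gamma)$ of distinct indices I record the bad pattern
\[T^\gamma_{\alpha,\beta}=\{j\in[K]:v^j_{\alpha,\beta}\in B^\gamma_{\alpha,\beta}\}\subseteq[K],\]
encoding which of the $K$ candidates for $\{\alpha,\beta\}$ lie in the bad set corresponding to $\gamma$.

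Next, I color each unordered triple $\{\alpha,\beta,\gamma\}$ with $\alpha<\beta<\gamma$ by the element $(T^\gamma_{\alpha,\beta},T^\beta_{\alpha,\gamma},T^\alpha_{\beta,\gamma})\in(2^{[K]})^3$ of a color palette of size $8^K$ which depends only on $m$. Call a color \emph{saturating} if the union of its three components equals $[K]$. By Ramsey's theorem for $3$-uniform hypergraphs, if $n$ is sufficiently large there is a monochromatic subset $U\subseteq[n]$ of size $m$ with some color $(T_1,T_2,T_3)$. Provided this color is non-saturating, the construction finishes: for each pair $\{\alpha,\beta\}\subseteq U$ with $\alpha<\beta$ in the induced order, monochromaticity forces $T^\gamma_{\alpha,\beta}$ to equal $T_1$, $T_2$, or $T_3$ for every $\gamma\in U\setminus\{\alpha,\beta\}$, according to whether $\gamma>\beta$, $\alpha<\gamma<\beta$, or $\gamma<\alpha$. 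Hence $\bigcup_{\gamma\in U\setminus\{\alpha,\beta\}}T^\gamma_{\alpha,\beta}\subseteq T_1\cup T_2\cup T_3\subsetneq[K]$, so I pick $j^*\in[K]\setminus(T_1\cup T_2\cup T_3)$ and set $v_{\alpha,\beta}:=v^{j^*}_{\alpha,\beta}$, which avoids $B^\gamma_{\alpha,\beta}$ for every relevant $\gamma$.

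The main obstacle is ensuring that the Ramsey-monochromatic color is non-saturating. The key observation is that, for any fixed triple, the samples entering its three pattern components are independent across $j\in[K]$, so the probability that its color is saturating equals
\[\prod_{j\in[K]}\Pr\!\left[j\in T^\gamma_{\alpha,\beta}\cup T^\beta_{\alpha,\gamma}\cup T^\alpha_{\beta,\gamma}\right]\leq 0.3^K,\]
using the hypothesis $|B^\gamma_{\alpha,\beta}|\leq 0.1\,|V_{\alpha,\beta}|$ and a union bound within each $j$. Consequently the expected number of saturating-colored triples is at most $0.3^K\binom{n}{3}$. Combining this density bound with Lemma~\ref{lem:hoeffding} and a standard probabilistic alteration (or an Ajtai-type independence-number bound for sparse $3$-uniform hypergraphs), for $K=K(m)$ large enough and $n=n(m)$ sufficiently large one extracts a subset $W\subseteq[n]$ of size at least $R_3(m,8^K)$ that contains no saturating-colored triple; applying Ramsey's theorem inside $W$ then produces the required $U$ with a non-saturating color. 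The delicate quantitative step is choosing $K$ large enough so that $0.3^K$ is small compared with the density threshold needed to contain an independent set of size $R_3(m,8^K)$, while keeping $8^K$ finite in terms of $m$ alone so that $n$ can be chosen as a function of $m$.
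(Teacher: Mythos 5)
Your reduction to a non-saturating monochromatic colour is correct as far as it goes: if the monochromatic colour $(T_1,T_2,T_3)$ is non-saturating, your case analysis of which component governs $T^\gamma_{\alpha,\beta}$ according to the position of $\gamma$ relative to $\alpha,\beta$ is right, and picking $j^*\notin T_1\cup T_2\cup T_3$ works. The genuine gap is the final extraction step, and it is not a ``delicate quantitative'' issue that more care can fix: it is impossible as set up. Your Ramsey application uses $8^K$ colours, so the saturation-free set $W$ must have size at least $R_3(m,8^K)$. For fixed $m\geq 4$ this grows at least exponentially in the number of colours (colouring each triple of $[N]$ by the colour of its two smallest vertices in a $c$-colouring of $K_N$ with no monochromatic $K_{m-1}$ shows $R_3(m,c)\geq R_2(m-1,c)$, and already $R_2(3,c)\geq 2^c$), hence at least like $2^{8^K}$, doubly exponential in $K$. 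All your probabilistic control, however, is singly exponential in $K$: the saturating triples form a $3$-graph of density $O(0.3^K)$, and a density bound $\delta$ on an $n$-vertex $3$-graph only guarantees an independent set of size $O(\delta^{-1/2})$ \emph{independently of $n$} (average degree $\approx\delta n^2$, so the Ajtai-type bound gives $\approx n/\sqrt{\delta n^2}=\delta^{-1/2}$; taking $n$ larger does not help because the number of saturating triples scales with $n^3$). So the best $W$ you can extract has size about $0.3^{-K/2}\approx 1.83^K$, hopelessly short of $R_3(m,8^K)$ for every $K$. The same obstruction kills the natural variants: a union bound over $m$-subsets whose triples are all saturating (using disjoint triples for independence) only allows $n\leq 0.3^{-\Theta(K)}$, again far below $R_3(m,8^K)$; and a global density bound cannot by itself prevent Ramsey on all of $[n]$ from returning a monochromatic set in a saturating colour, since sparse $3$-graphs can contain large cliques. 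The root problem is circular: the number of Ramsey colours depends on the sampling parameter $K$, which must itself grow to make saturation rare.

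For comparison, the paper avoids this circularity by never letting a sampling parameter enter the Ramsey step: it argues by contradiction, assigns to each $m$-tuple of indices a pair $\{i,j\}\in\binom{[m]}{2}$ recording a pair whose bad sets cover the whole vertex class, applies $m$-uniform Ramsey with only $\binom{m}{2}$ colours (a quantity depending on $m$ alone) to get a monochromatic $(6m+2)$-tuple, and then a simple counting argument (some $v\in V_{\alpha,\beta}$ lies in at most $m$ of the $6m$ bad sets, so a suitable $m$-tuple through $\alpha,\beta$ avoids them) yields the contradiction. If you want to salvage your sampling idea, you must decouple $K$ from the number of colours fed to Ramsey; your current colouring by full subsets of $[K]$ does not allow that.
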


The constant 0.1 in Lemma~\ref{lem:selection} can be replaced 
by any number smaller than $1/3$ with a slightly more careful analysis, but it fails for $1/3$.

\begin{proof}[Proof of Lemma~\ref{lem:selection}]
Assume the opposite. For every $m$-tuple $u_1<u_2<\dots<u_m$ of indices, 
there exist $\alpha,\beta=u_i,u_j$ such that
\[\bigcup\limits_{k\in [m]\setminus\{i,j\}}B_{\alpha, \beta}^{u_k}=V_{\alpha, \beta}.\]
Otherwise, we could select $v_{\alpha, \beta}$ outside this union
for all pairs $\alpha, \beta$.
This means that to each $m$-tuple of indices
we can assign a pair $\{i,j\}\in {[m] \choose 2}$.
If $n$ is the Ramsey number $R_m\left(6m+2,{m \choose 2}\right)$,
we can find a $6m+2$-tuple of indices
where all $m$-tuples receive the same pair $\{i,j\}$.
We can consider that this $6m+2$-tuple is $[6m+2]$.

Now, fix $\alpha=2m+1$ and $\beta=4m+2$. 
For each $\gamma\in [6m+2]\setminus\{\alpha,\beta\}$, 
we have $|B_{\alpha,\beta}^\gamma|\leq 0.1 |V_{\alpha, \beta}|$. 
Therefore, there are at most $0.6 |V_{\alpha, \beta}|$ vertices in $V_{\alpha, \beta}$ 
which belong to at least $m$ of these sets $B_{\alpha, \beta}^\gamma$. 
Thus there exists $v\in V_{\alpha, \beta}$ which belongs to at most $m$ of these sets.
 In particular, there are at least $m$ indices $\gamma$ 
 in each of the intervals $[1,2m]$, $[2m+2, 4m+1]$ and $[4m+3, 6m+2]$ 
 such that $v\notin B_{\alpha, \beta}^\gamma$. 
 Using this, we can find an $m$-tuple $1\leq u_1<u_2<\dots<u_m\leq 6m+2$ 
 with $u_i=\alpha$, $u_j=\beta$, and $v\notin B_{\alpha, \beta}^{u_k}$ for all $k\in [m]\setminus\{i,j\}$.
  This contradicts the fact that the $m$-tuple receives the pair $\{i,j\}$.
  \end{proof}

To conclude these preliminaries,
we will prove the following lemma,
that will give us the final step of the proof of Theorem~\ref{thm:palette}.
Remember that, given a palette $\cp$,
we defined an $n$-reduced hypergraph $\cp[n]$
in Section~\ref{sec:structure}:

\begin{lemma}\label{lem:embedding}
Let $F$ be a $3$-graph on $n$ vertices, 
and let $\cp=(\mathcal{C}, \mathcal{A})$ be a palette. 
Then $F$ admits $\cp$ if and only if $\cp[n]$ embeds $F$.
\end{lemma}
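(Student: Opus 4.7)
The plan is to unpack both definitions and observe that they encode essentially the same data: a linear order on $V(F)$ together with a coloring of $\binom{V(F)}{2}$ by elements of $\mathcal{C}$, constrained by the same admissibility condition on edges. Thus the proof is a direct translation between the two formulations, with no combinatorial obstacle to speak of.

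For the forward direction, suppose $F$ admits $\cp$ via an order $\preceq$ on $V(F)$ and a map $\varphi\colon\binom{V(F)}{2}\to\mathcal{C}$. Since $|V(F)|=n$, I would let $\tau\colon V(F)\to[n]$ be the unique order-preserving bijection from $(V(F),\preceq)$ to $([n],<)$; this $\tau$ is automatically injective. Then, recalling that each $V_{\alpha,\beta}$ in $\cp[n]$ is by construction a copy of $\mathcal{C}$, I would define $\varphi'(uv)\in V_{\tau(u),\tau(v)}$ to be the element corresponding to $\varphi(uv)\in\mathcal{C}$. For any edge $uvw\in E(F)$ with $u\prec v\prec w$, we have $\tau(u)<\tau(v)<\tau(w)$ and $(\varphi(uv),\varphi(uw),\varphi(vw))\in\mathcal{A}$ by hypothesis, so by the definition of $\cp[n]$ the triple $\varphi'(uv)\varphi'(uw)\varphi'(vw)$ is an edge of the constituent $A_{\tau(u),\tau(v),\tau(w)}$. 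Hence $(\tau,\varphi')$ is an embedding of $F$ into $\cp[n]$.

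For the converse direction, suppose $\cp[n]$ embeds $F$ via $\tau$ and $\varphi'$. Since $\tau$ is injective, I would define a total order on $V(F)$ by setting $u\prec v$ if and only if $\tau(u)<\tau(v)$. Using the canonical identification $V_{\tau(u),\tau(v)}\cong\mathcal{C}$, I would let $\varphi(uv)\in\mathcal{C}$ be the color corresponding to $\varphi'(uv)$. For any edge $uvw\in E(F)$ with $u\prec v\prec w$, the triple $\varphi'(uv)\varphi'(uw)\varphi'(vw)$ lies in the constituent $A_{\tau(u),\tau(v),\tau(w)}$, which by construction consists exactly of the triples whose associated colors lie in $\mathcal{A}$; therefore $(\varphi(uv),\varphi(uw),\varphi(vw))\in\mathcal{A}$, witnessing that $F$ admits $\cp$.

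The only subtle point, and it is not really a difficulty, is keeping track of the canonical identification between each vertex set $V_{\alpha,\beta}$ of $\cp[n]$ and the color set $\mathcal{C}$, and similarly between the edges of the constituent $A_{\alpha,\beta,\gamma}$ (for $\alpha<\beta<\gamma$) and the admissible triples $\mathcal{A}$; once this bookkeeping is made explicit, the equivalence is immediate from matching the orderings in the two definitions. I do not expect any real obstacle in the argument.
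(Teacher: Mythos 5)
Your proposal is correct and follows essentially the same argument as the paper: in both directions one translates between the order $\preceq$ with coloring $\varphi$ and the pair $(\tau,\varphi')$ via the canonical identification of each $V_{\alpha,\beta}$ with $\mathcal{C}$ and of each constituent with $\mathcal{A}$. The bookkeeping you describe is exactly what the paper's proof does.
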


\begin{proof}
Let $\cp=(\mathcal{C}, \mathcal{A})$. If $F$ admits $\mathcal{P}$, let $v_1\prec v_2\prec\dots\prec v_n$ be the order on $V(F)$ and $\varphi:{V(H) \choose 2}\rightarrow\mathcal{C}$ be the function that certify this fact. Then, to embed $F$ in $\cp[n]$, simply take the function $\tau(v_i)=i$ and $\varphi'$ sending $v_iv_j$ to the copy of $\varphi(v_iv_j)$ in $V_{i,j}$. This satisfies that, for all pairs of vertices $u,v\in V(F)$ we have $\varphi'(uv)\in V_{\tau(u)\tau(v)}$, and for every $uvw\in E(F)$ we have that $\varphi'(uv)\varphi'(uw)\varphi'(vw)\in E(H)$. 

On the other hand, if $\cp[n]$ embeds $F$, then $\tau:V(F)\rightarrow [n]$ must be bijective. Let $\preceq$ be the order that $\tau$ induces on $V(F)$, and color each pair of vertices with its image under $\varphi$. Then if $u\prec v\prec w$ form an edge of $F$, the colors of $uv$, $uw$ and $vw$ form an ordered triple of $\mathcal{A}$.
\end{proof}

\section{Proof of Theorem~\ref{thm:palette}}\label{sec:mainproof}

We will start by proving Lemma~\ref{lem:lowres}, 
and then use it to prove Theorem~\ref{thm:palette}. 
We have already described how to use the lemma to prove the theorem, 
so let us go into a bit more detail 
about the proof of Lemma~\ref{lem:lowres} itself. 
The number $s$ of vertices in each part $S_{\alpha, \beta}$ will be $s=rt$, 
where $r$ and $t$ each depend only on $\varepsilon$. 
We will follow an algorithm which consists of $r$ rounds, 
and on each round we will select $t$ vertices. 

The set of active indices after $i$ rounds will be $U_i$,
where $[N]=U_0\supseteq U_1\supseteq\dots\supseteq U_r$ and $|U_r|=m$.
For $\alpha, \beta\in U_i$, we will denote by $T_{\alpha, \beta}^{(i)}$ the multiset of $t$ vertices 
from $V_{\alpha,\beta}$ selected on the $i$-th round of the algorithm, 
and $S_{\alpha,\beta}=\sum_{j=1}^rT_{\alpha,\beta}^{(j)}$ will be the sum of these multisets 
(remember that multiset sum is defined so that the multiplicity of each element is additive).

For each triple of indices $\alpha, \beta, \gamma\in U_r$, 
we would like to control the number of edges in 
$S_{\alpha, \beta}\times S_{\alpha, \gamma}\times S_{\beta, \gamma}$.
A drawback of our method is that it is hard to say anything
about the codegree of vertices $v\in T_{\alpha,\beta}^{(i)}$ and $w\in T_{\alpha,\gamma}^{(i)}$
selected on the same round of the algorithm.
Fortunately, it will not be necessary to do so.
Instead, for any triple $1\leq i<j<k\leq r$ of rounds
and for all triples of indices $\alpha, \beta, \gamma\in U_k$
we will have that $T_{\alpha, \beta}^{(i)}\times T_{\alpha, \gamma}^{(j)}\times T_{\beta, \gamma}^{(k)}$
contains at least $(d-\varepsilon/2)t^3$ edges.
Adding over all triples $1\leq i<j<k\leq r$ and all permutations of $\alpha, \beta, \gamma$
we obtain that 
\[E(S_{\alpha, \beta}, S_{\alpha, \gamma}, S_{\beta, \gamma})\geq r(r-1)(r-2)(d-\varepsilon/2)t^3.\]
Then for $r=\lceil6\varepsilon^{-1}\rceil$, 
this number is at least $(d-\varepsilon)(rt)^3=(d-\varepsilon)s^3$.

\begin{proof}[Proof of Lemma~\ref{lem:lowres}]
Fix $r=\lceil 6\varepsilon^{-1}\rceil$. Fix $t=\lceil36\varepsilon^{-2}\log(10r^2)\rceil$
so that $e^{-(\varepsilon/6)^2t}\leq 1/(10r^2)$, and set $s=rt$.
Consider integers $N_0>N_1>\dots>N_r$,
where $N_r=m$ and $N_{k-1}=n(N_{k})$ as in the statement of Lemma~\ref{lem:selection}.
Finally, fix $N=N_0$, and $U_0=[N]$.

Let $H$ be an $N$-reduced hypergraph with density at least $d$.
For every pair $\alpha,\beta\in U_0$, let 
$(V_{\alpha,\beta})^t$ be the set of vectors
of length $t$ with entries in $V_{\alpha, \beta}$.
On each round of the algorithm, we will select
the set $T_{\alpha, \beta}^{(i)}\in (V_{\alpha, \beta})^t$
by applying Lemma~\ref{lem:selection} to the sets $(V_{\alpha, \beta})^t$,
for a specific choice of bad sets $B_{\alpha, \beta}^\gamma$.

The end goal in our algorithm is to ensure that, for all $\alpha,\beta,\gamma\in U_r$,
and every $1\leq i<j<k\leq r$,
the tripartite graph on 
$T_{\alpha, \beta}^{(i)}\times T_{\alpha, \gamma}^{(j)}\times T_{\beta, \gamma}^{(k)}$
contains at least $(d-\varepsilon/2)t^3$ edges.
We will keep certain invariants during the process
to guarantee that random choices in future rounds succeed with high enough probability.
Here, $\mu(A,B,C)$ indicates the edge-density of $H$ 
on the tripartite graph on $A\times B\times C$.
These invariants are:
\begin{itemize}
\item $\mu(T_{\alpha,\beta}^{(k)}, V_{\alpha, \gamma}, V_{\beta, \gamma})\geq d-\varepsilon/6$ for all $1\leq k\leq r$ and $\alpha, \beta, \gamma\in U_k$.
\item $\mu(T_{\alpha,\beta}^{(k)}, T_{\alpha, \gamma}^{(j)}, V_{\beta, \gamma})\geq d-\varepsilon/3$ for all $1\leq j<k\leq r$ and $\alpha, \beta, \gamma\in U_k$.
\item $\mu(T_{\alpha,\beta}^{(k)}, T_{\alpha, \gamma}^{(j)}, T_{\beta, \gamma}^{(i)})\geq d-\varepsilon/2$ for all $1\leq i<j<k\leq r$ and $\alpha, \beta, \gamma\in U_k$.
\end{itemize}

Let $1\leq k\leq r$, and let $\alpha, \beta, \gamma\in U_{k-1}$.
We will analyze the $k$-th round of the algorithm.
Assume that all invariants are preserved up to round $k-1$.
We want to make sure that, if all of these indices end up in $U_k$, 
then the set $T_{\alpha, \beta}^{(k)}$ does not break any of the invariants. 
We set $B_{\alpha, \beta}^{\gamma}\subseteq (V_{\alpha, \beta})^t$ 
to be the set of elements that break one or more of the invariants,
if selected as $T_{\alpha, \beta}^{(k)}$. 
We will show that the size of this set is at most $0.1|V_{\alpha, \beta}|^t$,
meaning that we will be in the setup of Lemma~\ref{lem:selection}.

For a fixed choice of $\alpha, \beta, \gamma$ we will consider several functions $\mu:V_{\alpha, \beta}\rightarrow[0,1]$. These will be, for all $1\leq i, j\leq k-1$ with $i\neq j$:
\begin{align*}\mu(v)=&\mu(\{v\}, V_{\alpha, \gamma}, V_{\beta, \gamma}),&\text{ which satisfies }& &\bar\mu(V_{\alpha, \beta})&=\mu(V_{\alpha, \beta}, V_{\alpha, \gamma}, V_{\beta, \gamma})\geq d.\\
\mu_j(v)=&\mu(\{v\}, T_{\alpha, \gamma}^{(j)}, V_{\beta, \gamma}),&\text{ which satisfies }& &\bar\mu_j(V_{\alpha, \beta})&=\mu(V_{\alpha, \beta}, T_{\alpha, \gamma}^{(j)}, V_{\beta, \gamma})\geq d-\varepsilon/6.\\
\mu'_j(v)=&\mu(\{v\}, V_{\alpha, \gamma}, T_{\beta, \gamma}^{(j)}),&\text{ which satisfies }& &\bar\mu'_j(V_{\alpha, \beta})&=\mu(V_{\alpha, \beta}, V_{\alpha, \gamma}, T_{\beta, \gamma}^{(j)})\geq d-\varepsilon/6.\\
\mu_{(j,i)}(v)=&\mu(\{v\}, T_{\alpha, \gamma}^{(j)}, T_{\beta, \gamma}^{(i)}),&\text{ which satisfies }& &\bar\mu_{(j,i)}(V_{\alpha, \beta})&=\mu(V_{\alpha, \beta}, T_{\alpha, \gamma}^{(j)}, T_{\beta, \gamma}^{(i)})\geq d-\varepsilon/3.
\end{align*}

We have one function $\mu$, $k-1$ functions of the form $\mu_j$, 
$k-1$ functions of the form $\mu'_j$ 
and $(k-1)(k-2)$ functions of the form $\mu_{(j,i)}$.
That yields a total of no more than $k^2\leq r^2$ functions considered. 
By Lemma~\ref{lem:hoeffding},
when $T_{\alpha, \beta}^{(k)}$ is selected
from $(V_{\alpha, \beta})^t$ uniformly at random,
for each of these functions $\mu$, we have 
\[\Pr\left(\bar\mu(T_{\alpha, \beta}^{(k)})\leq \bar\mu(V_{\alpha, \beta})-\varepsilon/6\right)\leq e^{-(\varepsilon/6)^2t}\leq \frac{1}{10r^2}.\]
Hence with probability at least $0.1$, 
all of the functions considered above have averages on $T_{\alpha, \beta}^{(k)}$ 
which are below the expected value by no more than $\varepsilon/6$, 
meaning that $|B_{\alpha, \beta}^\gamma|\leq 0.1|V_{\alpha, \beta}|^t$.
 
Apply Lemma~\ref{lem:hoeffding} to the sets 
$(V_{\alpha, \beta})^t$ and $B_{\alpha, \beta}^\gamma$.
This yields a subset of indices $U_k\subseteq U_{k-1}$
and a choice of $T_{\alpha, \beta}^{(k)}$ preserving all the invariants. 
Since $|U_{k-1}|=N_{k-1}=n(N_k)$, we have $|U_k|=N_k$. 
After $r$ rounds, we have $|U_r|=N_r=m$, 
and all throughout the algorithm we have that 
$\mu(T_{\alpha,\beta}^{(k)}, T_{\alpha, \gamma}^{(j)}, T_{\beta, \gamma}^{(i)})\geq d-\varepsilon/2$, 
or equivalently $E(T_{\alpha,\beta}^{(k)}, T_{\alpha, \gamma}^{(j)}, T_{\beta, \gamma}^{(i)})\geq (d-\varepsilon/2)t^3$. 
Taking the sets $S_{\alpha,\beta}=\sum_{i=1}^rT_{\alpha, \beta}^{(i)}$, 
we see that $E(S_{\alpha, \beta}, S_{\alpha, \gamma}, S_{\beta, \gamma})\geq (d-\varepsilon)s^3$, 
as explained earlier.\end{proof}

Now that we have Lemma~\ref{lem:lowres}, 
we can finish the proof of Theorem~\ref{thm:palette} 
by applying Ramsey's theorem to the hypergraph induced by $H$ on the sets $S_{\alpha, \beta}$, 
to ensure that all triples of indices induce the exact same constituent. 
The result is then a hypergraph of the form $\cp[n]$, 
for some palette $\cp$.

\begin{proof}[Proof of Theorem~\ref{thm:palette}]
Let $F$ be a 3-graph, let $\pi=\pu(F)$, and let $\varepsilon>0$.
Our goal is to obtain a palette $\cp$ with density at least $\pi-\varepsilon$
such that $F$ does not admit $\cp$.
Set $n=|V(F)|$, 
set $s=s(\varepsilon/2)$ from the statement of Lemma~\ref{lem:lowres},
set $m=R_3(n,2^{s^3})$,
and set $N=N(m,\varepsilon/2)$ from the statement of Lemma~\ref{lem:lowres}.
Applying Proposition~\ref{prop:reiher} with $\delta=\varepsilon/2$,
there exists an $N$-reduced hypergraph $H$
with density at least $\pi-\varepsilon/2$
which does not embed $F$.

Apply Lemma~\ref{lem:lowres} to this hypergraph $H$
to find a subset $U\subseteq[N]$ of $m$ indices,
and for each $\alpha, \beta\in U$
we find a multiset $S_{\alpha,\beta}$ of $s$ vertices in $V_{\alpha, \beta}$,
such that the $m$-reduced hypergraph $H'$
induced by $H$ on the sets $S_{\alpha, \beta}$
has density at least $\pi-\varepsilon$.

For every pair $\alpha, \beta\in U$,
let $v_{\alpha, \beta}^1, v_{\alpha, \beta}^2, \dots, v_{\alpha, \beta}^s$
be an ordering of the vertices of $S_{\alpha, \beta}$.
For every triple $\alpha<\beta<\gamma$ in $U$,
this identifies the edges of the constituent 
$A_{\alpha, \beta, \gamma}$ of $H'$
with a subset of $[s]^3$.
There are $2^{s^3}$ such subsets.
Applying Ramsey's theorem,
there exists a subset $U'\subseteq U$ of size $n$
and a subset $\mathcal{A}\subseteq [s]^3$
such that for all triples $\alpha<\beta<\gamma$ in $U'$
the constituent $A_{\alpha, \beta, \gamma}$
is identified with $\mathcal{A}$.
Moreover, since $H'$ has density at least $\pi-\varepsilon$,
we have that $|\mathcal{A}|\geq(\pi-\varepsilon)s^3$.

Let $\cp$ be the palette with color set $[s]$,
where the family of admissible triples is $\mathcal{A}$.
Because $|\mathcal{A}|\geq (\pi-\varepsilon)s^3$,
$\cp$ has density at least $\pi-\varepsilon$.
The $n$-reduced hypergraph induced by $H'$
on the sets $S_{\alpha, \beta}$ with $\alpha, \beta\in U'$
is precisely $\cp[n]$.
Because $H$ does not embed $F$,
$\cp[n]$ does not embed $F$ either
(the fact that the $S_{\alpha,\beta}$
are multisets from $V_{\alpha, \beta}$
rather than subsets is irrelevant,
because an embedding of $F$ in $H'$
can only use one vertex from each $S_{\alpha, \beta}$).
This means by Lemma~\ref{lem:embedding} 
that $F$ does not admit $\cp$,
so $\pa(F)\geq \pi-\varepsilon$.
By taking $\varepsilon\rightarrow 0$
we conclude that $\pa(F)\geq \pi=\pu(F)$.\end{proof}

\section{Vertex-pair density and degree-dense palettes}\label{sec:pve}

In this section we will prove Theorem~\ref{thm:pve},
which is an analogue of Theorem~\ref{thm:palette}
for the parameter $\pve$.
In order to define $\pve$, we need a stronger notion of uniform density.
Given sets $A\subseteq V(H)$ and $B\in {V(H) \choose 2}$,
we denote \[E_{\pedge}(A,B)=\{a\in A, \{b,c\}\in B: \{a,b,c\}\in E(H)\}.\]

\begin{definition}
A $3$-graph $H$ is said to be \emph{$(d,\varepsilon,\pedge)$-dense}
if for every subset $A\subseteq V(H)$
and every $B\subseteq {V(H)\choose 2}$
we have $E_{\pedge}(A,B)\geq d|A||B|-\varepsilon|V(H)|^3$.
The \emph{vertex-pair Tur\'an density} $\pve(F)$
of a $3$-graph $F$
is defined as the infimum of the values of $d$, 
for which there exists $\varepsilon>0$ and $N$ such that 
every $(d,\varepsilon,\pedge)$-dense hypergraph
on at least $N$ vertices
contains $F$ as a subgraph.
\end{definition}

Reiher, R\"odl and Schacht proved in~\cite{ReiRS16}
that $\pve(K_4^{(3)})=1/2$.
Additional results and bounds concerning $\pve$
can be found in~\cite{Rei20}.

The connection between $\pve$ and palettes
manifests itself not through the density of a palette,
as was the case in $\pu$,
but through its minimum degree.

\begin{definition}
The minimum degree of a palette $\cp(\mathcal{C},\mathcal{A})$, 
denoted by $\delta(\cp)$, 
is the largest value of $d$ such that, for all $a\in\mathcal{C}$,
\[\{(b,c):(a,b,c)\in\mathcal{A}\},\{(b,c):(b,a,c)\in\mathcal{A}\},\{(b,c):(b,c,a)\in\mathcal{A}\}\geq d|\mathcal{C}|^2.\]
The parameter $\pdpal(F)$
 of a $3$-graph $F$ is defined as 
\[\pdpal(F):=\sup\{\delta(\cp):\cp\text{ palette, }F\text{ does not admit }\cp\}.\]
\end{definition}

Given a palette $\cp$, 
we can construct a tripartite $3$-graph $H_{\cp}$ 
on three copies $\mathcal{C}_1,\mathcal{C}_2,\mathcal{C}_3$ of $\mathcal{C}$, 
by taking as edges the ordered triples in $\mathcal{A}$.
Then $d(\cp)$ relates to the edge-density in $H_{\cp}$,
while $\delta(\cp)$ relates to its minimum degree.

We can show that $\pdpal(F)\leq \pve(F)$. 
Indeed, using the same construction mentioned in Section~\ref{sec:palettes},
we can use a palette $\cp$ to generate 
a $(\delta(\cp), o(1), \pedge)$-dense, $F$-free $3$-graph $H_n$.
We answer another question mentioned by Reiher
by showing that equality always holds:

\begin{theorem}\label{thm:pve}
For every $3$-graph $F$, we have $\pve(F)=\pdpal(F)$.
\end{theorem}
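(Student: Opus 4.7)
The plan is to mimic the proof of Theorem~\ref{thm:palette} from Section~\ref{sec:mainproof}, replacing each ingredient by its minimum-degree analogue. Since the inequality $\pdpal(F) \leq \pve(F)$ is already noted just before the statement (via the tripartite construction $H_{\cp}$), only the reverse direction $\pdpal(F) \geq \pve(F)$ requires work. The proof needs three modifications to the argument of Section~\ref{sec:mainproof}.

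First, I need an analogue of Proposition~\ref{prop:reiher} for $\pve$. Specifically, $\pve(F)$ should equal the supremum of values $d$ such that for every $N$ there is an $N$-reduced hypergraph not embedding $F$ in which, in every constituent $A_{\alpha,\beta,\gamma}$ and for every vertex $v$ lying in one of the three vertex classes, the number of edges of $A_{\alpha,\beta,\gamma}$ incident to $v$ is at least $d$ times the product of the sizes of the other two classes. This is natural from the hypergraph regularity perspective, as the $\pedge$-density condition on the host hypergraph translates into a uniform lower bound on the degree of each vertex in each regular triad; an adaptation of Reiher's proof should establish it.

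Second, I need an analogue of Lemma~\ref{lem:lowres} in which ``density at least $d$'' is replaced by ``minimum degree at least $d$'' in the sense just described. I would run the same $r$-round sampling algorithm with $r$, $t$, and $s$ chosen as in the original proof. The invariants, however, would now track degrees of \emph{individual} vertices rather than overall edge densities: after round $k$, for every previously sampled vertex $v$ in some $T_{\alpha,\beta}^{(i)}$ and every relevant triple of indices $\alpha,\beta,\gamma\in U_k$ and triple of rounds involving $v$, the degree of $v$ into the product of the other two multisets should be at least an appropriate fraction of the full degree. When sampling $T_{\alpha,\beta}^{(k)}$ uniformly at random from $(V_{\alpha,\beta})^t$, for each already-selected vertex and each future configuration of multisets, Hoeffding's inequality bounds the probability that a degree drops by more than $\varepsilon/6$ by $e^{-(\varepsilon/6)^2 t}$. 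For the freshly sampled vertices themselves, an additional application of Hoeffding, combined with the fact that each individual vertex $v\in V_{\alpha,\beta}$ already has degree at least $d$ into each future $V_{\alpha,\gamma}\times V_{\beta,\gamma}$, gives high-probability preservation of the degree invariant for the new vertices too. Crucially, the number of vertices and configurations involved grows only as a polynomial in $r$ (and $t$), so it depends only on $\varepsilon$, not on $m$ or $N$. Setting $t$ large enough and union-bounding therefore yields $|B_{\alpha,\beta}^\gamma| \leq 0.1 |V_{\alpha,\beta}|^t$, so Lemma~\ref{lem:selection} still applies.

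Third, once the minimum-degree analogue of Lemma~\ref{lem:lowres} produces multisets of size $s$ on $m$ indices with the required degree condition, the finishing step of the proof of Theorem~\ref{thm:palette} carries over almost verbatim: apply Ramsey's theorem to the (finitely many) isomorphism types of constituents on $[s]\times[s]\times[s]$ to find a subset of $n$ indices inducing a reduced hypergraph of the form $\cp[n]$, use Lemma~\ref{lem:embedding} to conclude that $F$ does not admit $\cp$, and observe that the palette $\cp$ now inherits a minimum degree of at least $\pve(F) - \varepsilon$. Letting $\varepsilon \to 0$ completes the proof. The main obstacle will be the bookkeeping of per-vertex invariants in the analogue of Lemma~\ref{lem:lowres}: one must track not just current edge densities between multisets but individual degrees of each previously-sampled vertex across all future configurations, and verify that the total count of bad events remains bounded by a function of $\varepsilon$ alone, so that they combine into a single application of Lemma~\ref{lem:selection} without a dependence on $m$.
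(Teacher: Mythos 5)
Your overall architecture is the same as the paper's: a Reiher-type reduction for $\pve$ (the paper's Proposition~\ref{prop:reiherpve}), a degree-density analogue of Lemma~\ref{lem:lowres} proved by the same $r$-round sampling, and an identical Ramsey/embedding finish. The gap is in the middle step, exactly at the point you defer to ``bookkeeping'': your treatment of the freshly sampled vertices does not work. For the final minimum-degree bound you need every vertex $v$ sampled in a late round, say $v\in T_{\beta,\gamma}^{(k)}$, to have large degree into $T_{\alpha,\beta}^{(i)}\times T_{\alpha,\gamma}^{(j)}$ for multisets fixed in \emph{earlier} rounds. At the moment $v$ is drawn those multisets are already fixed, so Hoeffding gives you nothing: the degree of a single random vertex into a fixed pair of small multisets is a single bounded random variable, not an average of independent samples, and the hypothesis that $v$ has degree at least $d$ into the full product $V_{\alpha,\beta}\times V_{\alpha,\gamma}$ does not transfer to the small product. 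Worse, the natural strengthening ``every $v\in V_{\alpha,\gamma}$ has large degree into $T_{\alpha,\beta}^{(i)}\times V_{\beta,\gamma}$'' is false in general: the paper exhibits the red/blue random-colouring example, in which for \emph{every} choice of $T_{\alpha,\beta}^{(i)}$ some vertices see only red edges into it and hence have degree $0$ there. So an invariant quantified over all vertices (or certified for fresh vertices only via their degree into the full sets) cannot survive the union bound you describe.

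The paper's fix, which your proposal is missing, is to maintain two additional \emph{approximate} invariants (items (iii) and (iv) in Section~\ref{sec:pve}): at the time the earlier multisets are chosen, Hoeffding applied to each fixed vertex of $V_{\alpha,\beta}$ together with Markov's inequality guarantees that all but a $(1000rt)^{-3}$-fraction of $V_{\alpha,\beta}$ has large degree into $T\times V$ and into $T\times T'$. A later random sample of $t$ vertices then avoids this exceptional set with probability close to $1$, and that is what certifies the degree condition for freshly sampled vertices; Hoeffding over the new sample handles only the previously selected vertices, and even there the reference mean is the intermediate quantity $\mu(v,T_{\alpha,\gamma}^{(j)},V_{\beta,\gamma})$, i.e.\ the paper's invariant (i), which your list of invariants should include explicitly. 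Without this exceptional-set mechanism the bad sets $B_{\alpha,\beta}^{\gamma}$ cannot be bounded by $0.1|V_{\alpha,\beta}|^{t}$ and Lemma~\ref{lem:selection} does not apply. Your first and third steps are fine and coincide with the paper's proof.
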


In broad terms, the proof of Theorem~\ref{thm:pve}
is similar to that of Theorem~\ref{thm:palette}.
Before sketching the proof,
we need to see how $\pve$ relates
to reduced hypergraphs.
We say that the $N$-reduced hypergraph $H$,
with vertex sets $V_{\alpha, \beta}$ 
and constituents $V_{\alpha, \beta. \gamma}$,
has degree-density at least $d$
if for all triples of indices $\alpha, \beta, \gamma$,
every vertex $v\in V_{\alpha, \beta}$
has degree at least $d|V_{\alpha,\gamma}||V_{\beta, \gamma}|$
in $A_{\alpha, \beta, \gamma}$.
We can now give the analogue of Proposition~\ref{prop:reiher}:

\begin{proposition}[{\cite[Theorem 3.3]{Rei20}}]
\label{prop:reiherpve}
Let $F$ be a $3$-graph. Then $\pve(F)$ 
is the supremum of the values of $d$
such that, for all $N$,
there exists an $N$-reduced hypergraph 
with degree-density at least $d$ which does not embed $F$.
\end{proposition}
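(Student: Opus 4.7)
The plan is to prove the equality in both directions. Let $\pve^{R}(F)$ denote the supremum appearing on the right-hand side. I would prove $\pve(F)\geq \pve^{R}(F)$ by a random blow-up construction, and $\pve(F)\leq \pve^{R}(F)$ by the hypergraph regularity method followed by a clean-up step that upgrades the averaged density of constituents to their per-vertex minimum degree.

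For the direction $\pve(F)\geq \pve^{R}(F)$, fix $d<\pve^{R}(F)$ and $\eta>0$, and for every $N\geq N_{0}(\eta)$ pick an $N$-reduced hypergraph $H$ with vertex sets $V_{\alpha,\beta}$ and constituents $A_{\alpha,\beta,\gamma}$ of degree-density at least $d$ which does not embed $F$. I would form the random blow-up $G_n$ on $[N]\times[n]$ by sampling, independently for every $\{\alpha,\beta\}\in\binom{[N]}{2}$ and every $(a,b)\in[n]^{2}$, a uniform color $\chi_{\alpha\beta}(a,b)\in V_{\alpha,\beta}$, and declaring a triple $\{(\alpha,a),(\beta,b),(\gamma,c)\}$ with $\alpha,\beta,\gamma$ distinct to be an edge iff the ordered triple of colors lies in $A_{\alpha,\beta,\gamma}$. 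Decomposing any $A\subseteq V(G_n)$ and $B\subseteq\binom{V(G_n)}{2}$ by first-coordinate type shows that $\mathbb{E}|E_{\pedge}(A,B)|\geq d|A||B|-O(|V(G_n)|^{3}/N)$, where the error absorbs configurations with repeated first coordinates; for $N\geq 2/\eta$ this is at least $d|A||B|-\tfrac{\eta}{2}|V(G_n)|^{3}$. Hoeffding concentration together with a union bound over a net of pairs $(A,B)$ then yields $(d-\eta,\eta,\pedge)$-density of $G_n$ with high probability as $n\to\infty$. Any copy of $F$ in $G_n$ whose first-coordinate map $\tau$ is injective directly induces an embedding of $F$ in $H$, contradicting the choice of $H$; copies with non-injective $\tau$ arise from $F$ collapsing onto at most $|V(F)|-1$ parts and occupy an $O(1/N)$-fraction of all $|V(F)|$-patterns of first coordinates, so a small edge-removal step destroys all of them while degrading $\pedge$-density by at most an additional $\tfrac{\eta}{2}|V(G_n)|^{3}$.

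For the direction $\pve(F)\leq \pve^{R}(F)$, suppose for contradiction that $\pve(F)>\pve^{R}(F)+3\eta$. Then there is a sequence of $(\pve^{R}(F)+2\eta,\varepsilon_{n},\pedge)$-dense $F$-free $3$-graphs $H_{n}$ with $\varepsilon_{n}\to 0$ and $|V(H_{n})|\to\infty$. I would apply the hypergraph regularity lemma to each $H_{n}$, obtaining a vertex equipartition $W_{1},\ldots,W_{N}$ of $V(H_{n})$ together with regular bipartite refinements of every pair $(W_{\alpha},W_{\beta})$ and regular tripartite refinements for every triple; here $N$ may be taken arbitrarily large. This yields a reduced hypergraph $R_{n}$ on index set $[N]$, whose vertex sets $V_{\alpha,\beta}$ consist of the regular bipartite pieces between $W_{\alpha}$ and $W_{\beta}$ and whose constituent edges record precisely which triples of pieces form a regular tripartite $3$-graph of density at least $\pve^{R}(F)+\eta$. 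The hypergraph counting lemma ensures that any embedding of $F$ into $R_{n}$ would give a copy of $F$ in $H_{n}$, so $R_{n}$ does not embed $F$. The crucial remaining step is to upgrade the averaged density of the constituents to the per-vertex minimum degree required of a reduced hypergraph: if some piece $P\in V_{\alpha,\beta}$ has abnormally low codegree in some constituent $A_{\alpha,\beta,\gamma}$, I would take $A=W_{\gamma}$ and $B$ equal to the edge set of $P$ viewed as pairs in $V(H_{n})$, and use the regular tripartite decomposition to conclude that $|E_{\pedge}(A,B)|$ falls below $(\pve^{R}(F)+2\eta)|A||B|-\varepsilon_{n}|V(H_{n})|^{3}$, contradicting the $\pedge$-density hypothesis on $H_{n}$. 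Discarding the small set of deficient pieces gives a clean $N$-reduced hypergraph of degree-density at least $\pve^{R}(F)+\eta$ not embedding $F$; since $N$ was arbitrary, this contradicts the definition of $\pve^{R}(F)$.

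The hardest part will be the clean-up step in the second direction, which must convert the global averaged $\pedge$-density condition into the stronger, pointwise per-vertex degree-density of the reduced hypergraph. Quantitatively, one must show that any hypothetical low-codegree piece $P$, upon being unpacked through the regular bipartite and tripartite decompositions, translates into a genuine $(A,B)$ pair in $H_{n}$ with an $E_{\pedge}(A,B)$ count below the threshold permitted by $(\pve^{R}(F)+2\eta,\varepsilon_{n},\pedge)$-density, with the regularity errors and the clean-up loss all absorbed into the $\eta$- and $\varepsilon_{n}$-slacks. Direction (i), though requiring careful handling of non-injective copies in the blow-up, is essentially a concentration estimate once one observes that the $\varepsilon|V(H)|^{3}$ slack in the definition of $\pedge$-density absorbs precisely the configurations in which $A$ or $B$ is concentrated in only a few parts.
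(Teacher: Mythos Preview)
The paper does not prove this proposition; it is quoted from Reiher's survey \cite{Rei20} (as Theorem~3.3 there) and used as a black box, so there is no proof in the present paper to compare against. Your outline follows the standard route by which such reduced-hypergraph characterizations are established in Reiher's framework: a random blow-up of a degree-dense reduced hypergraph for the lower bound on $\pve$, and hypergraph regularity plus a clean-up step for the upper bound. In particular, your identification of the key point---that the $\pedge$-density hypothesis, applied with $A$ a single vertex class and $B$ the pair set of a single bipartite piece, forces each piece to have large degree in every constituent---is exactly what distinguishes the $\pve$ version of the reduced-hypergraph characterization from the $\pu$ version (Proposition~\ref{prop:reiher}), and is the reason one obtains degree-density rather than mere density.

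One point to tighten: in the lower-bound direction you sample colors $\chi_{\alpha\beta}(a,b)$ indexed by ordered pairs $(a,b)\in[n]^2$, but the palette/blow-up construction colors \emph{unordered} pairs, and moreover your construction should not produce edges within a single block $\{\alpha\}\times[n]$ at all, so the handling of ``non-injective $\tau$'' copies is simpler than you suggest (any copy of $F$ in $G_n$ automatically has injective first-coordinate map if $F$ has at least one edge, since edges only span three distinct blocks). Also, the concentration step must yield density uniformly over \emph{all} pairs $(A,B)$, not just a net; the standard way around this is to observe that the number of edges crossing any fixed triple of blocks concentrates, and then the $\pedge$-count for arbitrary $(A,B)$ is a nonnegative combination of these. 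These are routine fixes and do not affect the overall strategy.
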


Next we will sketch the proof of Theorem~\ref{thm:pve}.
Let $\pi=\pve(F)$ and let $\varepsilon>0$.
The proof starts applying Proposition~\ref{prop:reiherpve}
to find an $N$-reduced hypergraph
with degree-density at least $\pi-\varepsilon/2$.
We apply an analogous of Lemma~\ref{lem:lowres}
to find $U\subseteq[N]$ of size $m$
and multisets $S_{\alpha, \beta}$ of size $s$
such that each constituent $A_{\alpha,\beta,\gamma}$
induced on these multisets
has minimum degree at least $(\pi-\varepsilon)s^2$.
Applying Ramsey's theorem, we find a subset $U'\subseteq U$
of $n$ indices where all constituents are the same.
This corresponds to an $n$-reduced hypergraph of the form $\cp[n]$
for some palette $\cp$ with minimum degree at least $d-\varepsilon$
which is not admitted by $F$.

The key point here is thus
adapting the proof of Lemma~\ref{lem:lowres},
which concerns the density of the reduced hypergraph $H$,
to make it about degree-density instead.
The result is the following lemma:

\begin{lemma}\label{lem:pvelowres}
For all $m$ there exists $s$, such that for all $\varepsilon>0$ there exists $N$ 
such that the following holds: 
if $H$ is an $N$-reduced hypergraph with degree-density at least $d$, 
there exists a subset $U\subseteq [N]$ of $m$ indices, 
and for each $\alpha,\beta$ in $U$ 
there exists a multiset $S_{\alpha,\beta}$ of $s$ vertices in $V_{\alpha,\beta}$,
such that the $m$-reduced hypergraph induced by $H$
on the sets $S_{\alpha,\beta}$ has degree-density at least $d-\varepsilon$.
\end{lemma}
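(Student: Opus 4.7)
The plan is to mirror the algorithmic structure of the proof of Lemma~\ref{lem:lowres}, with the ``bulk'' edge-density invariants replaced by \emph{pointwise} degree invariants reflecting the degree-density hypothesis. I fix $r=\lceil 12\varepsilon^{-1}\rceil$, take $t$ large enough (depending on $\varepsilon$ and $r$) so that Hoeffding beats a union bound of size $O(r^{2}t)$, and set $s=rt$. The algorithm runs for $r$ rounds. After round $k$ we keep an index set $U_k\subseteq U_{k-1}$ (decreasing in size by an application of Lemma~\ref{lem:selection} each round) and, for every pair $\alpha,\beta\in U_k$, a multiset $T_{\alpha,\beta}^{(k)}\in (V_{\alpha,\beta})^{t}$ chosen on that round; finally we take $S_{\alpha,\beta}=\sum_{l=1}^{r}T_{\alpha,\beta}^{(l)}$.

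The invariants I maintain, for every distinct $\alpha,\beta,\gamma\in U_k$, every $l\le k$, every $v\in T_{\alpha,\beta}^{(l)}$, and every pair $i,j\le k$ with $i\neq j$, $i,j\neq l$, are the vertex-wise analogues of those used in Lemma~\ref{lem:lowres}: $\deg_v(V_{\alpha,\gamma},V_{\beta,\gamma})\ge d|V_{\alpha,\gamma}||V_{\beta,\gamma}|$ (free from the degree-density hypothesis), $\deg_v(T_{\alpha,\gamma}^{(j)},V_{\beta,\gamma})\ge (d-\varepsilon/6)t|V_{\beta,\gamma}|$, the symmetric version with $T_{\beta,\gamma}^{(i)}$, and $\deg_v(T_{\alpha,\gamma}^{(j)},T_{\beta,\gamma}^{(i)})\ge (d-\varepsilon/2)t^{2}$. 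On round $k$ I sample each $T_{\alpha,\beta}^{(k)}$ uniformly from $(V_{\alpha,\beta})^{t}$. For every fixed $v\in V_{\alpha,\beta}$ and every already-chosen pair $T_{\alpha,\gamma}^{(j)},T_{\beta,\gamma}^{(i)}$, two nested applications of Hoeffding (Lemma~\ref{lem:hoeffding}) — first replacing $V_{\alpha,\gamma}$ by the random $T_{\alpha,\gamma}^{(j)}$ when it was sampled, then $V_{\beta,\gamma}$ by $T_{\beta,\gamma}^{(i)}$ — show that the probability $v$ violates the relevant invariant is at most $2e^{-(\varepsilon/6)^{2}t}$. Union bounding over the $O(r^{2})$ choices of $(i,j)$ and over the $t$ vertices of the candidate $T_{\alpha,\beta}^{(k)}$ yields that, with probability at least $0.9$, the sampled $T_{\alpha,\beta}^{(k)}$ violates no invariant with respect to the triple $(\alpha,\beta,\gamma)$. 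Letting $B_{\alpha,\beta}^{\gamma}\subseteq (V_{\alpha,\beta})^{t}$ be the set of multisets that do violate some such invariant, we have $|B_{\alpha,\beta}^{\gamma}|\le 0.1\,|V_{\alpha,\beta}|^{t}$, so Lemma~\ref{lem:selection} produces $U_k\subseteq U_{k-1}$ together with compatible choices of all $T_{\alpha,\beta}^{(k)}$; this is what forces $N$ to grow with $\varepsilon$ and $m$.

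After $r$ rounds, for each $v\in T_{\alpha,\beta}^{(l)}\subseteq S_{\alpha,\beta}$ the final degree bound follows by summation:
\[\deg_v(S_{\alpha,\gamma},S_{\beta,\gamma})=\sum_{i,j\in[r]}\deg_v\bigl(T_{\alpha,\gamma}^{(j)},T_{\beta,\gamma}^{(i)}\bigr)\ge (r-1)^{2}(d-\varepsilon/2)\,t^{2}\ge (d-\varepsilon)\,s^{2},\]
using $r\ge 12/\varepsilon$ to absorb the loss from the at most $2r-1$ ``diagonal'' and ``same-as-$l$'' pairs $(i,j)$ for which no invariant was imposed. This is the desired degree-density of the induced $m$-reduced hypergraph.

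The main new difficulty compared to Lemma~\ref{lem:lowres} is that the invariants must hold at \emph{every} vertex of each newly chosen multiset, rather than in aggregate. This necessitates an additional union bound of size $t$ inside the definition of $B_{\alpha,\beta}^{\gamma}$, which in turn requires $t$ to absorb an extra $\log t$ factor in the Hoeffding calculation. Once this is set up correctly, the rest of the argument is a direct transcription of the density proof, with $\deg_{v}(\cdot,\cdot)$ playing the role of the three-set edge density $\mu(\cdot,\cdot,\cdot)$ throughout.
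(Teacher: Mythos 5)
There is a genuine gap, and it is exactly at the point the paper identifies as the main new difficulty. Your invariants constrain only vertices that already belong to chosen multisets $T_{\alpha,\beta}^{(l)}$. But at round $k$ you also need the \emph{newly} sampled vertices $v\in T_{\alpha,\beta}^{(k)}$ to have large degree into previously fixed pairs such as $T_{\alpha,\gamma}^{(j)}\times T_{\beta,\gamma}^{(i)}$ and $T_{\alpha,\gamma}^{(j)}\times V_{\beta,\gamma}$. For this union bound to work you must know that only a tiny fraction of \emph{all} of $V_{\alpha,\beta}$ is bad with respect to those already-chosen sets. Your justification — ``two nested applications of Hoeffding, first when $T_{\alpha,\gamma}^{(j)}$ was sampled, then when $T_{\beta,\gamma}^{(i)}$ was sampled'' — appeals retroactively to randomness from earlier rounds, but that randomness has already been spent: Lemma~\ref{lem:selection} does not return a uniformly random choice, it returns \emph{some} choice avoiding precisely the bad events you declared at that round, and you declared none involving the vertices of the ambient sets $V_{\alpha,\beta}$. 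An adversarial (but degree-dense) reduced hypergraph can contain a small subset $W_0\subseteq V_{\alpha,\gamma}$ such that every $w\in W_0$ has the required degree, yet half of the vertices of $V_{\alpha,\beta}$ have no neighbors at all in $\{w\}\times V_{\beta,\gamma}$ for every $w\in W_0$; nothing in your bad sets prevents the selection lemma from outputting $T_{\alpha,\gamma}^{(j)}\subseteq W_0$, after which almost every candidate $T_{\alpha,\beta}^{(k)}$ violates your invariant and the bound $|B_{\alpha,\beta}^{\gamma}|\le 0.1|V_{\alpha,\beta}|^{t}$ fails at round $k$.

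The fix is the content of the paper's invariants~\ref{item:itiii} and~\ref{item:itiv}: alongside the pointwise conditions on already-selected vertices, one maintains that all but a $(1000rt)^{-3}$-fraction of the vertices of each full set $V_{\alpha,\beta}$ have large degree into the chosen pairs, and this ``most vertices'' statement is itself put into the bad sets at the round where the relevant $T$-set is sampled (via Hoeffding for each fixed $v$ plus Markov's inequality over $v$). Note also that these invariants genuinely need the exceptional sets: the paper's red/blue random-colouring example shows an exception-free version of your second and fourth invariants is unattainable, so simply adding them for all $v\in V_{\alpha,\beta}$ would not work either. The rest of your outline (the round structure, the use of Lemma~\ref{lem:selection}, excluding same-round pairs, and the final summation, up to the harmless miscount $(r-1)^2$ versus $(r-1)(r-2)$) matches the paper's argument.
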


Once again, the proof of Lemma~\ref{lem:pvelowres}
is similar to that of Lemma~\ref{lem:lowres}.
We follow an algorithmic approach,
with $r$ rounds, where on each round
we select $t$ vertices from each $V_{\alpha, \beta}$.
We want to ensure that
for all $\alpha, \beta, \gamma\in U_r$
and every $1\leq i<j<k\leq r$,
the tripartite graph between
$T_{\alpha, \beta}^{(i)}$, $T_{\alpha, \gamma}^{(j)}$
and $T_{\beta, \gamma}^{(k)}$
has minimum degree at least $(\pi-\varepsilon/2)s^2$.
Adding up all triples $i,j,k$
and all permutations of $\alpha, \beta, \gamma$,
the resulting constituent $A_{\alpha, \beta, \gamma}$
has minimum degree at least $(\pi-\varepsilon/2)(r-1)(r-2)t^2$
which is at least $(\pi-\varepsilon)s^2$
for $r=\lceil 6\varepsilon^{-1}\rceil$.

The main difference comes in the choice of invariants.
It would be natural to require
that the tripartite graphs on
$T_{\alpha, \beta}^{(i)}$, $V_{\alpha, \gamma}$, $V_{\beta, \gamma}$
and on $T_{\alpha, \beta}^{(i)}$, $T_{\alpha, \gamma}^{(j)}$, $V_{\beta, \gamma}$
also satisfy some minimum degree condition.
However, this is not attainable.
For example, if all of the sets $V_{\alpha, \beta}$,
$V_{\alpha, \gamma}$ and $V_{\beta, \gamma}$ are large and of equal size,
take the complete tripartite $2$-graph $G$ between them,
and randomly color the edges red and blue.
Take $H$ to be the $3$-graph whose edges
are the blue triangles of $G$.
With high probability, for all choices of the $t$-set $T_{\alpha, \beta}^{(i)}$
there will be vertices $v\in V_{\alpha, \gamma}$
which are connected to all vertices in $T_{\alpha, \beta}^{(i)}$
through red edges.
As such, the tripartite graph on
$T_{\alpha, \beta}^{(i)}$, $V_{\alpha, \gamma}$, $V_{\beta, \gamma}$
has minimum degree 0.

To get around constructions like these,
we will modify our invariants
to allow for a small number of exceptions
within the sets $V_{\alpha, \beta}$.
We denote by $\mu(A,B,C)$
the edge-density within $H$
of the tripartite graph between $A$, $B$ and $C$.

\begin{enumerate}[label=(\roman*)]
\item\label{item:iti} $\mu(v, T_{\alpha, \gamma}^{(j)}, V_{\beta, \gamma})\geq \pi-\varepsilon/4$ for all distinct $i,j\in[r]$, all $\alpha, \beta, \gamma\in U_{\max\{i,j\}}$ and all $v\in T_{\alpha,\beta}^{(i)}$.
\item\label{item:itii} $\mu(v, T_{\alpha, \gamma}^{(j)}, T_{\beta, \gamma}^{(k)})\geq \pi-\varepsilon/2$ for all distinct $i,j,k\in[r]$, all $\alpha, \beta, \gamma\in U_{\max\{i,j,k\}}$ and all $v\in T_{\alpha, \beta}^{(i)}$.
\item\label{item:itiii} $\mu(v, T_{\alpha, \gamma}^{(i)}, V_{\beta, \gamma})\geq \pi-\varepsilon/4$ for all $i\in[r]$, all $\alpha, \beta, \gamma\in U_i$ and at least $\left(1-\frac{1}{(1000rt)^3}\right)|V_{\alpha, \beta}|$ vertices $v\in V_{\alpha,\beta}$.
\item\label{item:itiv} $\mu(v, T_{\alpha, \gamma}^{(j)}, T_{\beta, \gamma}^{(i)})\geq \pi-\varepsilon/2$ for all distinct $i,j\in[r]$, all $\alpha, \beta, \gamma\in U_{\max\{i,j\}}$ and at least $\left(1-\frac{2}{(1000rt)^3}\right)|V_{\alpha, \beta}|$ vertices $v\in V_{\alpha,\beta}$.
\end{enumerate}

The proof is now similar to
the proof of Lemma~\ref{lem:lowres}.
Suppose that the four invariants
hold after the $k-1$-th round,
and we now need to select the sets $T_{\alpha, \beta}^{(k)}$.
We set $B_{\alpha, \beta}^\gamma\subseteq(V_{\alpha, \beta})^t$
to be the set of choices of $T_{\alpha, \beta}^{(k)}$
for which one of the invariants would be broken
for some permutation of $\alpha, \beta, \gamma$.
We show that $|B_{\alpha, \beta}^\gamma|\leq 0.1|V_{\alpha, \beta}|^t$,
which allows us to use Lemma~\ref{lem:selection}.

The number of choices of $T_{\alpha, \beta}^{(i)}$
that break each invariant is at most $0.01|V_{\alpha, \beta}|^t$.
We will sketch here the count for~\ref{item:iti} and \ref{item:itiv}.
The other two invariants are similar.
We take a value of $t$ large enough that $e^{-(\varepsilon/4)^2t}<1/(1000rt)^4$.

There are four ways in which invariant~\ref{item:iti}
can break on $\alpha, \beta, \gamma$
when selecting $T_{\alpha, \beta}^{(k)}$.
For some $j<k$, one can have that
$\mu(v,T_{\alpha, \gamma}^{(j)},V_{\beta, \gamma})<\pi-\varepsilon/4$
for some $v\in T_{\alpha, \beta}^{(k)}$,
or $\mu(v, T_{\alpha, \beta}^{(k)}, V_{\beta, \gamma})<\pi-\varepsilon/4$
for some $v\in T_{\alpha, \gamma}^{(j)}$,
or the same two scenarios swapping $\alpha$ and $\beta$.
Because invariant~\ref{item:itiii} holds before the $k$-th round,
the first scenario can only happen if $v$ is selected
from a subset of at most $|V_{\alpha, \beta}|/(1000rt)^3$ vertices.
The probability that at least one of the $t$ vertices
selected on the $k$-th round comes from this set
is at most $1/1000$.
On the other hand, because $\mu(v, V_{\alpha, \beta}, V_{\beta, \gamma})\geq\pi$
holds for all $v\in V_{\alpha, \gamma}$,
for each individual $v\in T_{\alpha, \gamma}^{(j)}$
the second scenario holds
with probability at most $e^{-(\varepsilon/4)^2t}$
by Lemma~\ref{lem:hoeffding}.
The third and fourth scenarios are analogous to the previous two.
Adding these up, for all choices of $j$ and $v$,
the total probability that $T_{\alpha, \beta}$ breaks invariant~\ref{item:iti}
 is less than $0.01$.
 
 There are two ways in which invariant~\ref{item:itiv}
 can break on $\alpha, \beta, \gamma$
when selecting $T_{\alpha, \beta}^{(k)}$.
For some $j<k$, one can have that
$\mu(v, T_{\alpha, \beta}^{(k)}, T_{\beta, \gamma}^{(j)})<\pi-\varepsilon/2$
holds for more than $2|V_{\alpha,\gamma}|/(1000rt)^3$ vertices $v\in V_{\alpha, \gamma}$,
or the same scenario swapping $\alpha$ and $\beta$.
Let $L$ be the set of vertices $v\in V_{\alpha, \gamma}$
with $\mu(v, V_{\alpha, \beta}, T_{\beta, \gamma}^{(j)})<\pi-\varepsilon/4$.
Since invariant~\ref{item:itiii} holds before the $k$-th round,
we have that $|L|\leq|V_{\alpha, \gamma}|/(1000rt)^3$.
For each $v\notin V_{\alpha, \beta}$, by Lemma~\ref{lem:hoeffding},
$\Pr\left(\mu(v, T_{\alpha, \beta}^{(k)}, T_{\beta, \gamma}^{(j)})<\pi-\varepsilon/2\right)\leq e^{-(\varepsilon/4)^2t}\leq 1/(1000rt)^4$.
By Markov's inequality,
the probability that this happens for more than $|V_{\alpha, \gamma}|/(1000rt)^3$
vertices $v\notin L$ is at most $1/1000rt$.
Adding up over all choices of $j$ and the symmetric case
swapping $\alpha$ and $\beta$,
the probability that the choice of $T_{\alpha, \beta}^{(k)}$
breaks invariant~\ref{item:itiv}
is at most $0.01$.

Proceeding the same way with invariants~\ref{item:itii} and \ref{item:itiii},
we conclude that $|B_{\alpha, \beta}^\gamma|\leq 0.1|V_{\alpha, \beta}|^t$.

\section{Applications}\label{sec:applications}

\subsection{The broken tetrahedron}

In this section we will use Theorem~\ref{thm:palette}
to give a short proof of $\pu(K_4^{(3)-})=1/4$.
This was first proved by Glebov, Kr\'al' and Volec~\cite{GleKV16}
using the flag algebra method,
and independently by Reiher, R\"odl and Schacht~\cite{ReiRS18a}
using the hypergraph regularity method.
While our proof of Theorem~\ref{thm:palette}
implicitly uses regularity in the form of Proposition~\ref{prop:reiher},
once Theorem~\ref{thm:palette} is treated as a black box
neither regularity nor flag algebras are required.

\begin{theorem}[\cite{GleKV16, ReiRS18a}]\label{thm:k4-}
$\pu(K_4^{(3)-})=1/4$.
\end{theorem}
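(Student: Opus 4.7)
The plan is to apply Theorem~\ref{thm:palette} and determine $\pa(K_4^{(3)-})=1/4$ by a direct palette-theoretic argument. For the lower bound, I would exhibit R\"odl's palette $\mathcal{P}_0=(\{0,1\},\{(0,1,0),(1,0,1)\})$, which has density $1/4$. Identifying color $0$ on an ordered pair $u\prec v$ with the arc $u\to v$ and color $1$ with the reverse arc, any coloring of $\binom{V(F)}{2}$ becomes a tournament on $V(F)$, and the two admissible triples correspond precisely to cyclic triangles. An admission of $K_4^{(3)-}$ would then produce a tournament on four vertices in which three of its four triples are cyclic, which is impossible, since any tournament on four vertices contains at most $\binom{4}{3}-\sum_i\binom{d_i^+}{2}\leq 2$ cyclic triangles. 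Hence $K_4^{(3)-}$ does not admit $\mathcal{P}_0$, so $\pa(K_4^{(3)-})\geq 1/4$, and Theorem~\ref{thm:palette} yields $\pu(K_4^{(3)-})\geq 1/4$.

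For the upper bound $\pa(K_4^{(3)-})\leq 1/4$, let $\mathcal{P}=(\mathcal{C},\mathcal{A})$ be a palette with $|\mathcal{A}|>|\mathcal{C}|^3/4$; I need to exhibit an admission of $K_4^{(3)-}$. Write $D_{ij}\subset\mathcal{C}^2$ for the projection of $\mathcal{A}$ onto coordinates $\{i,j\}\subset\{1,2,3\}$. The vertex of $K_4^{(3)-}$ lying in all three edges is fixed by every automorphism, so an admission is classified up to this symmetry by the position $k\in\{1,2,3,4\}$ of this ``apex'' in the ordering. Running through each case, admission is implied by any of four sufficient conditions on $\mathcal{C}$: a transitive triangle in $D_{12}$ (for $k=1$), a transitive triangle in $D_{23}$ (for $k=4$), and two analogous cross-conditions linking $D_{13}$ with $D_{12}$ (for $k=2$, namely $\exists a,b,c$ with $(a,b),(a,c)\in D_{13}$ and $(b,c)\in D_{12}$) or with $D_{23}$ (for $k=3$).

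The main obstacle will be to show that the simultaneous failure of all four conditions forces $|\mathcal{A}|\leq|\mathcal{C}|^3/4$; any single condition alone is too weak, since a transitive-triangle-free digraph on $|\mathcal{C}|$ vertices may contain as many as $|\mathcal{C}|^2/2$ arcs (attained by the digraph obtained from the balanced complete bipartite graph by orienting each edge both ways), which only yields $|\mathcal{A}|\leq|\mathcal{C}|^3/2$. I would combine all four conditions by writing $|\mathcal{A}|=\sum_c|\mathcal{A}_c|$ with $\mathcal{A}_c=\{(a,b):(a,b,c)\in\mathcal{A}\}\subseteq I^{13}(c)\times I^{23}(c)$, where $I^{ij}(c)$ denotes the in-neighborhood of $c$ in $D_{ij}$. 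The cross-condition from $k=3$ forces each $I^{13}(c)$ to be $D_{23}$-independent, the transitive-triangle-freeness of $D_{23}$ (from $k=4$) gives the same for $I^{23}(c)$, and the analogous statements hold after swapping the roles of $D_{12}$ and $D_{23}$ via $k=1,2$. A careful application of these structural constraints, together with a Cauchy--Schwarz estimate on $\sum_c|I^{13}(c)|\,|I^{23}(c)|$ and the extremal bounds on transitive-triangle-free digraphs, should then deliver the tight bound $|\mathcal{A}|\leq|\mathcal{C}|^3/4$, contradicting the density hypothesis and completing the proof.
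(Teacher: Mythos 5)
Your lower bound is correct and is the same construction as the paper's: your $\mathcal{P}_0$ is the paper's two-color palette $\{(1,2,1),(2,1,2)\}$ up to relabeling, and your tournament/cyclic-triangle count (at most two cyclic triangles on four vertices, versus the three needed) is a valid verification of non-admission, a step the paper leaves implicit. The set-up of your upper bound also matches the paper: your projections $D_{12}$ and $D_{23}$ are exactly the paper's digraphs $G_L$ and (the reverse of) $G_R$, and your four apex cases correctly translate admission of $K_4^{(3)-}$ into the four patterns among $D_{12},D_{13},D_{23}$.

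The genuine gap is the decisive counting step, which you only assert (``should then deliver the tight bound'') and whose sketched route does not close. Slicing by the third coordinate gives $|\mathcal{A}|\le\sum_c|I^{13}(c)|\,|I^{23}(c)|$, but none of the four cases forbids transitive triangles in $D_{13}$, so your only handle on $I^{13}(c)$ is that it is $D_{23}$-independent, and the properties you list do not bound this sum by $|\mathcal{C}|^3/4$: take $D_{23}$ to be the complete bipartite digraph with both orientations between parts of sizes $|\mathcal{C}|/3$ and $2|\mathcal{C}|/3$ (transitive-triangle-free), let $I^{13}(c)$ be the larger part for every $c$ (a $D_{23}$-independent set), and let $D_{12}=\emptyset$; then all four patterns are absent and every structural consequence you invoke holds, yet $\sum_c|I^{13}(c)|\,|I^{23}(c)|=\tfrac{8}{27}|\mathcal{C}|^3>\tfrac14|\mathcal{C}|^3$. (Such $D$'s cannot all be projections of one large $\mathcal{A}$, but this shows your chain of inequalities cannot establish the bound without using the joint structure in some unspecified way.) The clean fix is to slice by the \emph{middle} coordinate, as the paper does: every $(a,b,c)\in\mathcal{A}$ has $(a,b)\in D_{12}$ and $(b,c)\in D_{23}$, so $|\mathcal{A}|\le\sum_b d^-_{12}(b)\,d^+_{23}(b)\le\tfrac12\sum_b\left(d^-_{12}(b)^2+d^+_{23}(b)^2\right)$, where $d^-_{12}$ and $d^+_{23}$ are the in-degree in $D_{12}$ and out-degree in $D_{23}$. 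Transitive-triangle-freeness of $D_{12}$ (your case $k=1$) says two in-neighbors of a common vertex never span an arc of $D_{12}$, whence $\sum_b d^-_{12}(b)^2\le\sum_a d^+_{12}(a)\left(|\mathcal{C}|-d^+_{12}(a)\right)\le|\mathcal{C}|^3/4$, and the symmetric count using case $k=4$ gives $\sum_b d^+_{23}(b)^2\le|\mathcal{C}|^3/4$; AM--GM then yields $|\mathcal{A}|\le|\mathcal{C}|^3/4$, and your cross-conditions $k=2,3$ are not needed at all.
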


\begin{proof}
The hypergraph $K_4^{(3)-}$ does not admit the palette $\mathcal{Q}$ 
with color set $\{1,2\}$ and triples $\{(1,2,1),(2,1,2)\}$,
so $\pa(K_4^{(3)-})\geq d(\mathcal{Q})=1/4$.

Let $\cp=(\mathcal{C}, \mathcal{A})$ be a palette 
that $K_4^{(3)-}$ does not admit.
Construct two auxiliary directed graphs $G_L$ and $G_R$
on the vertex set $\mathcal{C}$.
Given $a,b\in \mathcal{C}$, not necessarily distinct,
we add the directed edge $\vec{ab}$ in $G_L$
if there exists $c\in \mathcal{C}$ such that $(a,b,c)\in\mathcal{A}$.
We add $\vec{ab}$ in $G_R$ if there exists $c\in \mathcal{C}$
such that $(c,b,a)\in \mathcal{A}$.

We claim that $G_L$ does not contain three edges
of the form $\vec{ab},\vec{ac},\vec{bc}$,
with $u,v,w$ not necessarily distinct.
Indeed, given four vertices with the order $v_1\prec v_2\prec v_3\prec v_4$,
by coloring the pair $v_1v_2$ in color $a$,
$v_1v_3$ in color $b$ and $v_1v_4$ in color $c$,
with the right choices of colors for $v_2v_3$, $v_2v_4$ and $v_3v_4$
we have that $K_4^{(3)-}$ admits $\cp$.
The same argument applies to $G_R$.


Let $d^+_L(c)$ and $d^-_L(c)$ denote the number
of out-neighbors and in-neighbors of $c$ in $G_L$.
Let $S$ be the set of triples $(a,b,c)$
with $\vec{ac},\vec{bc}\in G_L$.
Note that, for each such triple, we have $\vec{ab}\notin G_L$.
Therefore, by double-counting,

\[\sum\limits_{c\in\mathcal{C}}d^-_L(c)^2=|S|\leq \sum\limits_{a\in\mathcal{C}}(|\mathcal{C}|-d^+_L(a))d^+_L(a)\leq \sum\limits_{a\in\mathcal{C}}\frac{|\mathcal{C}|^2}{4}=\frac{|\mathcal{C}|^3}{4}.\]

The same happens in $G_R$.
In each triple $(a,b,c)\in \mathcal{A}$,
we have $\vec{ab}\in G_L$ and $\vec{cb}\in G_R$,
which means that
\[|\mathcal{A}|\leq \sum\limits_{b\in\mathcal{C}}d^-_L(b)d^-_R(b)\leq \sum\limits_{b\in\mathcal{C}}\frac{d^-_L(b)^2+d^-_R(b)^2}{2}\leq \frac{|\mathcal{C}|^3}{4},\]
and so $d(\cp)\leq 1/4$. We conclude that $\pa(K_4^{(3)-})=1/4$, and by Theorem~\ref{thm:palette}, $\pu(K_4^{(3)-})=1/4$.
\end{proof}

\subsection{Infinitely many values for uniform Tur\'an density}

As another application of Theorem~\ref{thm:palette}
we will prove Theorem~\ref{thm:inffamily},
showing that for every $k\geq 2$
there exists a $3$-graph $F_k$
with $\pu(F_k)=\frac12-\frac{1}{2k}$.
Consider the palette $\cp_k=([k], \mathcal{A}_k)$
where $\mathcal{A}_k=\{(x,y,z)\in [k]^3:x<z\}$.
One can easily check that $d(\cp_k)=\frac12-\frac{1}{2k}$.
The following claim will play an important role in the proof:

\begin{claim}\label{claim:path}
Let $\cp=(\mathcal{C}, \mathcal{A})$ be a palette
with $d(\cp)>\frac12-\frac{1}{2k}$.
Then $\mathcal{A}$ contains $k$ triples
of the form $(a_1, b_1, a_2),(a_2, b_2, a_3),\dots, (a_k, b_k, a_{k+1})$,
where the colors $a_i$ and $b_j$
are not necessarily distinct.
\end{claim}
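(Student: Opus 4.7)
The plan is to translate the claim into a density statement about an auxiliary directed graph on $\mathcal{C}$ and then apply a standard level-partition argument to DAGs.

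First, I would define a directed graph $G$ on vertex set $\mathcal{C}$ by placing a directed edge $a \to c$ (allowing loops $a=c$) whenever there exists some $b \in \mathcal{C}$ with $(a,b,c)\in\mathcal{A}$. The required conclusion—finding $a_1,b_1,a_2,\dots,a_k,b_k,a_{k+1}$ with $(a_i,b_i,a_{i+1})\in\mathcal{A}$ for each $i$—is exactly the statement that $G$ contains a directed walk of length $k$ (i.e.\ $k$ consecutive edges, vertices repeatable).

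Next I would bound $|E(G)|$ from below. Each ordered pair $(a,c)$ can be the projection of at most $|\mathcal{C}|$ triples of $\mathcal{A}$ (one for each choice of middle coordinate $b$), so
\[|E(G)| \;\geq\; \frac{|\mathcal{A}|}{|\mathcal{C}|} \;>\; \Bigl(\tfrac12-\tfrac{1}{2k}\Bigr)|\mathcal{C}|^2.\]

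Now I would argue by contradiction: suppose $G$ contains no directed walk of length $k$. Then $G$ has no loops and no directed cycle, because a single traversal of a directed cycle (or a loop) can be repeated to yield walks of arbitrary length. Hence $G$ is a loopless DAG in which the longest directed path uses at most $k$ vertices. Setting $V_i := \{v \in \mathcal{C} : \text{longest directed path ending at } v \text{ has exactly } i \text{ vertices}\}$ partitions $\mathcal{C}$ into at most $k$ antichains $V_1,\dots,V_k$, and every edge goes from some $V_i$ to some $V_j$ with $i < j$. Writing $n = |\mathcal{C}|$ and $n_i = |V_i|$ and using convexity (or Cauchy--Schwarz), $\sum n_i^2 \geq n^2/k$, so
\[|E(G)| \;\leq\; \sum_{i<j} n_i n_j \;=\; \frac{n^2 - \sum_i n_i^2}{2} \;\leq\; \frac{n^2 - n^2/k}{2} \;=\; \Bigl(\tfrac12-\tfrac{1}{2k}\Bigr) n^2,\]
contradicting the lower bound above.

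There is no real obstacle here: the whole argument is just the observation that $|\mathcal{A}| \leq |\mathcal{C}|\cdot|E(G)|$ combined with the standard fact that a DAG whose longest path has $\leq k$ vertices has at most $(1 - 1/k)\binom{n}{2}$ edges (sharp at the balanced complete $k$-partite tournament orientation, which matches the extremal palette $\cp_k$ used to define the threshold). The only thing to double-check is that the palette $\cp_k = ([k], \mathcal{A}_k)$ with $\mathcal{A}_k = \{(x,y,z) : x<z\}$ indeed witnesses tightness of the bound, confirming that the claim cannot be strengthened to $d(\cp) \geq \tfrac12-\tfrac{1}{2k}$.
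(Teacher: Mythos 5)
Your proposal is correct and follows essentially the same route as the paper: the same auxiliary digraph on $\mathcal{C}$ recording the (first, third) coordinates of triples, the same layering by longest walk/path ending at each vertex, and the same counting with $\sum_i n_i^2 \geq n^2/k$ to reach the bound $\bigl(\tfrac12-\tfrac1{2k}\bigr)|\mathcal{C}|^3$ on $|\mathcal{A}|$. The only cosmetic differences are that you phrase it as a contradiction via the DAG edge bound $|E(G)|\leq\sum_{i<j}n_in_j$ rather than bounding $|\mathcal{A}|$ directly, which changes nothing of substance.
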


\begin{proof}
Consider a directed graph $G$ on $\mathcal{C}$,
where we take a directed edge $\vec{uv}$
between two colors
if there exists a third color $w\in\mathcal{C}$
with $(u,w,v)\in\mathcal{A}$.
Our statement is equivalent to showing
that $G$ contains a walk on $k$ edges.

Label each vertex $v$ by the length of
the longest walk ending at $v$.
If for any $v$ this length is unbounded
or it is at least $k$, we are done.
Let $\mathcal{C}_i$ be the set of colors
from $\mathcal{C}$ receiving the label $i$.
Each triple in $(u,w,v)\in\mathcal{A}$
has $u$ in a lower set than $w$.
Thus 
\[|\mathcal{A}|\leq|\mathcal{C}|\sum\limits_{0\leq i<j\leq k-1}|\mathcal{C}_i||\mathcal{C}_j|=\frac12|\mathcal{C}|\left(|\mathcal{C}|^2-\sum\limits_{i=0}^{k-1}|\mathcal{C}_i|^2\right)\leq\frac12\left(1-\frac1k\right)|\mathcal{C}|^3,\]
meaning that $d(\cp)\leq\frac12-\frac{1}{2k}$.\end{proof}

Next we will construct a hypergraph $H$
which will be used as an intermediate step
in our construction of $F_k$.
A hypergraph is said to be \emph{linear}
if every pair of edges intersects in at most one vertex.

\begin{lemma}~\label{lem:monotone}
For every $k\geq 3$
there exists a positive integer $n$
and a linear $k$-graph $H$
on the vertex set $[n]$
such that for every permutation $\sigma$ of $[n]$
there exists an edge $e$ in $H$
such that $\sigma$ is monotone on the vertices of $e$.
\end{lemma}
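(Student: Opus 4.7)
The plan is to prove Lemma~\ref{lem:monotone} by a probabilistic construction, combining the Erd\H{o}s--Szekeres theorem with random sampling plus deletion. As a preliminary counting estimate, every permutation $\sigma$ of $[n]$ has at least $\Omega_k(n^k)$ monotone $k$-subsets: setting $M:=(k-1)^2+1$, the Erd\H{o}s--Szekeres theorem gives at least one monotone $k$-subsequence in each of the $\binom{n}{M}$ $M$-subsets of $[n]$, and each $k$-subset lies in $\binom{n-k}{M-k}$ such $M$-supersets, so double counting yields at least $\binom{n}{k}/\binom{M}{k}$ monotone $k$-subsets.

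I would take $n$ sufficiently large and construct a random $k$-graph $G$ on $[n]$ by including each $k$-subset independently with probability $p=c/n^{k-2}$, for a small constant $c=c(k)>0$ to be chosen. Let $H$ be the linear $k$-graph obtained from $G$ by deleting one edge from each pair of edges sharing at least two vertices. For any fixed permutation $\sigma$, the number $X_\sigma$ of monotone edges of $G$ for $\sigma$ is a sum of independent Bernoullis with $\mathbb{E}[X_\sigma]\ge c_1 c n^2$ for some $c_1=c_1(k)>0$, so by the Chernoff bound $\Pr[X_\sigma<c_1cn^2/2]\le e^{-\Omega(n^2)}$. A union bound over the $n!\le e^{n\log n}$ permutations then shows that, with probability $1-o(1)$, every $\sigma$ satisfies $X_\sigma\ge c_1cn^2/2$ in $G$.

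The total number $Y$ of bad pairs in $G$ (pairs of edges sharing at least two vertices) has $\mathbb{E}[Y]=\Theta(c^2 n^2)$, and a routine second-moment computation gives $\mathrm{Var}(Y)=O(c^2 n^2)$, so $\mathrm{Var}(Y)/\mathbb{E}[Y]^2=O(1/(c^2n^2))=o(1)$ and Chebyshev's inequality yields $Y\le 2\mathbb{E}[Y]$ with probability $1-o(1)$. Since deletion removes at most $Y$ edges in total, $H$ contains at least $X_\sigma-Y$ monotone edges for every $\sigma$. Choosing $c$ small enough that $2\mathbb{E}[Y]\le c_1cn^2/4$, the two high-probability events combine to give $X_\sigma-Y>0$ for every permutation $\sigma$ simultaneously, so with positive probability $H$ is a linear $k$-graph as desired.

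The main obstacle is that the deletion step could a priori remove exactly the monotone edges for some adversarial permutation. This is handled by making $c$ a sufficiently small constant, so that the expected number of bad pairs $\Theta(c^2n^2)$ is dominated by the expected monotone count $\Theta(cn^2)$, and by establishing concentration for both quantities: exponential concentration for $X_\sigma$ via Chernoff (tight enough to survive a union bound over $n!$ permutations), and $o(1)$ concentration for $Y$ via Chebyshev (needed only once, since $Y$ does not depend on $\sigma$).
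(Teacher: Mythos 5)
Your proposal is correct and follows essentially the same route as the paper: a random $k$-graph made linear by deleting edges from overlapping pairs, with Erd\H{o}s--Szekeres giving $\Omega_k(n^k)$ monotone $k$-subsets per permutation and a Chernoff-plus-union-bound over all $n!$ permutations. The only difference is bookkeeping: the paper takes $p=n^{3/2-k}$ so that the expected number of overlapping pairs ($O(n)$, handled by Markov) is polynomially below the roughly $n^{3/2}$ monotone edges per permutation, whereas you take $p=cn^{2-k}$ and separate two $\Theta(n^2)$-sized quantities by a small constant $c$ together with a Chebyshev estimate for the bad pairs.
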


\begin{proof}
Consider the random hypergraph $H'=H^{(k)}(n,p)$, 
where the vertex set is $[n]$ for a very large integer $n$
and every $k$-tuple of vertices becomes an edge
independently with probability $p$.
Set $p=n^{3/2-k}$.
Obtain $H$ from $H'$ by removing every edge
that intersects another edge from $H'$
in at least two vertices.
The resulting $H$ is therefore linear.

We claim that $E(H'\setminus H)\leq n^{5/4}$ 
with probability tending to 1.
Indeed, we can describe each pair of edges $e_1, e_2$
intersecting in two or more vertices
as a sequence of $2k-2$ vertices
$(u_1, u_2, v_1, v_2, \dots, v_{k-2}, w_1, w_2, \dots, w_{k-2})$,
where $e_1=u_1u_2v_1v_2\dots v_{k-2}$ and $e_2=u_1u_2w_1w_2\dots w_{k-2}$.

There are at $n^{2k-2}$ sequences of $2k-2$ vertices,
and for each of them, the probability that it corresponds
to two edges in $H'$ is at most $n^{3-2k}$.
Therefore, the expected number of pairs of edges
intersecting in two or more vertices is $O(n)$,
and by Markov's inequality, the probability
that $E(H'\setminus H)\geq n^{5/4}$ is $O(n^{-1/4})=o(1)$.

Next we will show that, with high probability,
for every permutation $\sigma$ there exists
an edge in $H$ for which $\sigma$ is monotone.
In fact, we claim that if $E(H'\setminus H)\leq n^{5/4}$,
for any fixed permutation $\sigma$
the probability that $\sigma$ is not monotone on any edge of $H$
is $o(1/n!)$, from which our claim follows by the union bound.

By the Erd\H{o}s-Szekeres theorem, 
among any $k^2$ elements of $n$
there exists a $k$-tuple on which $\sigma$ is monotone.
Therefore the number of $k$-tuples of $[n]$ 
on which $\sigma$ is monotone is at least
${n \choose k}/{k^2 \choose k}=\Omega(n^k)$.
The probability that 
fewer than $n^{5/4}$ of these $k$-tuples are edges of $H'$
is at most \[{n^k \choose n^{5/4}}(1-p)^{\Omega(n^k)}= n^{O(n^{5/4})}e^{-\Omega(pn^k)}=e^{-\Omega(n^{3/2})}=o(1/n!),\] 
as we wanted to show.\end{proof}

\begin{proof}[Proof of Theorem~\ref{thm:inffamily}]
Let $H$ be a linear $k+2$-graph on $[n]$
as in Lemma~\ref{lem:monotone}.
For every edge $e$ with vertices
$v_1<v_2<\dots< v_{k+2}$,
place an edge on the triple $v_iv_{i+1}v_{i+2}$ for each $i\in[k]$
to create the $3$-graph $F_k$.

On the one hand, $F_k$ admits every palette $\cp$
with density greater than $\frac12-\frac{1}{2k}$.
This is because, by Claim~\ref{claim:path},
there are $k$ admissible triples
of the form $(a_i,b_i,a_{i+1})$ for $i\in[k]$.
Color each pair of the form $v_iv_{i+1}$ with the color $a_i$,
and each pair $v_iv_{i+2}$ with the color $b_i$.
Because $H$ is linear, this coloring is consistent,
i.e., no pair of vertices receives more than one color.
Because each edge is of the form $v_iv_{i+1}v_{i+2}$,
the colors of $v_iv_{i+1}$, $v_iv_{i+2}$ and $v_{i+1}v_{i+2}$
form an admissible triple in $\cp$, hence $F_k$ admits $\cp$.

On the other hand, we will show
that $F_k$ does not admit $\cp_k$.
Assume for contradiction that $F_k$ admits $\cp_k$,
with the vertex order $\preceq$ and the coloring function $\varphi$.
There must exist an edge $e\in E(H)$ 
for which the order $\preceq$ is monotone 
with respect to the natural order $\leq$ on $[n]$.
Let $v_1<v_2<\dots<v_{k+2}$ be the vertices of $e$.
If $\preceq$ is increasing on $e$,
then $\varphi(v_1v_2)<\varphi(v_2v_3)<\dots<\varphi(v_{k+1}v_{k+2})$,
which is impossible since $\cp_k$ only has $k$ colors.
The same happens if $\preceq$ is decreasing on $e$,
producing a contradiction.
We conclude that $\pa(F_k)=\frac{1}{2}-\frac{1}{2k}$,
which by Theorem~\ref{thm:palette} is the same value as $\pu(F_k)$.\end{proof}

\subsection{Vertex-pair Tur\'an density 0}\label{sec:pve0}

As an application of Theorem~\ref{thm:pve},
we will give two characterizations of the 
family of $3$-graphs $F$ with $\pve(F)=0$.
It is unknown whether either of these
matches the description
obtained by Reiher, R\"odl and Schacht.

Let $\mathcal{A}\subseteq \NN^3$ be a set
of triples of positive integers.
We say that $\mathcal{A}$ is \emph{min-degenerate} if
the three entries of each element of $\mathcal{A}$ are different,
and for each pair $(a_1, a_2, a_3)\neq(b_1, b_2, b_3)\in\mathcal{A}$,
if $t\in \{a_1, a_2, a_3\}\cap\{b_1, b_2, b_3\}$ then
at least one of $t=\min\{a_1, a_2, a_3\}$ or $t=\min\{b_1, b_2, b_3\}$ holds.
We say that a $3$-graph $F$ is \emph{min-layered} if
there exists an ordering $\preceq$ of $V(F)$
and a function $\varphi:{V(F) \choose 2}\rightarrow\NN$
such that the set \[\mathcal{A}=\{(\varphi(uv),\varphi(uw),\varphi(vw)):uvw\in E(F),u\prec v\prec w\}\]
is min-degenerate.

Let $F_{\{a,b\}}$ be the free group on generators $a,b$.
The elements of this group are
the words of finite length on the alphabet $\{a,b,a^{-1},b^{-1}\}$,
where no letter is adjacent to its inverse.
The product of two words is equal to their concatenation,
after cancelling any adjacent inverse pairs.

The free group satisfies the universal property:
given any group $G$ and elements $x,y\in G$,
there exists a unique homomorphism 
$f:F_{\{a,b\}}\rightarrow G$
with $f(a)=x$ and $f(b)=y$.
For every $k$ there exists a finite group $G$
and elements $x,y\in G$
such the function $f$ 
restricted to $B_k$ is injective,
where $B_k$ is the set of all elements in $F_{\{a,b\}}$
where the length of the corresponding word is at most $k$ 
(see for example~\cite{DixPSS03}).

\begin{theorem}\label{thm:pve0}
Let $F$ be a $3$-graph. The following are equivalent:
\begin{enumerate}[label=(\roman*)]
\item\label{item:pve0} $\pve(F)=0$.
\item\label{item:fgroup} There exists an ordering $\preceq$ of $V(F)$, 
and a function $\psi:{V(F) \choose 2}\rightarrow F_{\{a,b\}}$,
such that for every $uvw\in E(F)$ with $u\prec v\prec w$
we have $(\psi(uv),\psi(uw),\psi(vw))=(x,xa,xb)$
for some $x\in F_{\{a,b\}}$.
\item\label{item:minlayer} $F$ is min-layered.
\end{enumerate}
\end{theorem}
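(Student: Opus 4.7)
I propose proving the equivalence via the cycle (ii) $\Rightarrow$ (iii) $\Rightarrow$ (i) $\Rightarrow$ (ii), invoking Theorem~\ref{thm:pve} to translate $\pve(F)=0$ into $\pdpal(F)=0$, i.e., the condition that $F$ admits every palette of positive minimum degree.

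For (ii) $\Rightarrow$ (iii), I compose $\psi$ with a bijection $F_{\{a,b\}}\to\NN$ chosen as a linear extension of the partial order on the image $S=\psi(\binom{V(F)}{2})$ given by $x\prec xa$ and $x\prec xb$ for each active triple $(x,xa,xb)$. Acyclicity follows because any directed cycle is built from base steps of right-multiplication by $a$ or $b$, and would thus yield a non-trivial positive word equal to the identity in $F_{\{a,b\}}$, which is impossible. The centre $x$ of each active triple therefore receives the smallest integer in its triple, placing the numerical minimum at position~$1$. Min-degeneracy reduces to the Cayley-tree observation that if two distinct active triples $(x_1,x_1a,x_1b)$ and $(x_2,x_2a,x_2b)$ share a vertex $y$, a short analysis of possible cancellations forces $y\in\{x_1,x_2\}$, so $y$ is the centre (hence the numerical minimum) of at least one of the two triples.

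For (iii) $\Rightarrow$ (i), I first construct $\psi$ satisfying (ii) from $\varphi$. Define $\mu:\NN\to F_{\{a,b\}}$ by processing integers in increasing order: if $t$ is a non-minimum in its (unique by min-degeneracy) triple $T$ at position $p$ while $\min T$ is at position $q$, set $\mu(t)=\mu(\min T)\cdot w_{q,p}$ for the appropriate reposition word $w_{q,p}\in\{a,b,a^{-1},b^{-1},a^{-1}b,b^{-1}a\}$; otherwise set $\mu(t)=e$. One checks directly that $\psi=\mu\circ\varphi$ satisfies (ii). To now deduce $\pdpal(F)=0$, fix any palette $\cp=(\mathcal{C},\mathcal{A})$ with $\delta(\cp)>0$. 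By the analysis in (ii) $\Rightarrow$ (iii), each connected component of $S$ in the graph whose edges are pairs of elements sharing an active triple has a unique minimal element $r$ (a cherry centre), and every element of the component lies in $r\cdot P$, where $P\subset F_{\{a,b\}}$ is the positive subsemigroup. Translating each component by the inverse of its root places the entire image in $P$. Now pick functions $f_a,f_b:\mathcal{C}\to\mathcal{C}$ with $(c,f_a(c),f_b(c))\in\mathcal{A}$ for every $c\in\mathcal{C}$, which exist because every position-$1$ link is non-empty by positive minimum degree. Define $\rho:P\to\mathcal{C}$ recursively by $\rho(e)=c_0$ arbitrary and $\rho(xa)=f_a(\rho(x))$, $\rho(xb)=f_b(\rho(x))$. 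Then $\varphi':=\rho\circ\psi$ is an admission of $\cp$ by $F$, since each active triple $(x,xa,xb)$ maps to $(\rho(x),f_a(\rho(x)),f_b(\rho(x)))\in\mathcal{A}$.

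For (i) $\Rightarrow$ (ii), take $k$ larger than the diameter of the graph on $\binom{V(F)}{2}$ whose edges connect two pairs belonging to a common edge of $F$, let $G_k$ be a finite group with a homomorphism $f_k:F_{\{a,b\}}\to G_k$ injective on $B_k$, and form the palette $\cp_k=(G_k,\{(g,gf_k(a),gf_k(b)):g\in G_k\})$ of minimum degree $1/|G_k|^2>0$. Admission by $F$ gives $\varphi_k:\binom{V(F)}{2}\to G_k$, which I lift to $\psi:\binom{V(F)}{2}\to F_{\{a,b\}}$ by spanning-tree propagation on this graph from an arbitrary root; each non-tree consistency check reduces to an identity of words of length at most $k$, which transfers from $G_k$ to $F_{\{a,b\}}$ via injectivity of $f_k$ on $B_k$. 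The main obstacle lies in the (iii) $\Rightarrow$ (i) step, specifically in establishing that each connected component of $S$ has a unique minimal element and hence translates into a single positive subsemigroup; this structural claim is what allows the recursive construction of $\rho$ to succeed with only functions $f_a,f_b$ rather than permutations of $\mathcal{C}$, which may fail to exist when $\delta(\cp)$ is very small.
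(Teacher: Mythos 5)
Your implication (i)$\Rightarrow$(ii) is essentially the paper's argument (finite quotient injective on a ball, palette $\{(g,gf(a),gf(b))\}$, lift back to $F_{\{a,b\}}$), up to a small quantitative slip: adjacent pairs in the auxiliary graph differ by $a$, $b$ or $a^{-1}b$, so tree-lifts can have word length about twice the diameter, and you need injectivity on a ball of radius roughly $2d+1$, not $d$ --- easily repaired. The real problems are in the other two steps. In (ii)$\Rightarrow$(iii), the ``short analysis of possible cancellations'' is false: two active triples can share an element that is the centre of neither, since $x_1a=x_2b$ whenever $x_2=x_1ab^{-1}$. Concretely, $(e,a,b)$ and $(ab^{-1},\,ab^{-1}a,\,a)$ share $a$, and such a pair is realizable by an $F$ satisfying (ii) (two disjoint edges). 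Under \emph{any} linear extension of your centre-precedence order, $a$ is larger than both $e$ and $ab^{-1}$, hence the minimum of neither triple, so the resulting labelling is not min-degenerate; the construction, not just its justification, fails. The paper instead orders the image of $\psi$ by word length in $F_{\{a,b\}}$: each nonempty $x$ fails to be the shortest element in exactly one of the three triples through it (the one determined by the last letter of $x$), which is precisely what min-degeneracy requires. Insisting that the centre be the minimum is exactly the property you cannot have.

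The gap you flag in (iii)$\Rightarrow$(i) is likewise fatal as stated: the claim that each component of the image of $\psi$ has a unique minimal element and lies in a single translate $r\cdot P$ of the positive semigroup is false, not merely unproved. The same example gives the component $\{e,a,b,ab^{-1},ab^{-1}a\}$, contained in no $r\cdot P$, and it is produced by your own $\mu$-construction applied to the min-degenerate set $\{(1,2,3),(4,5,2)\}$ (which arises from a min-layered $F$). So the route $\varphi'=\rho\circ\psi$ cannot be completed: forcing the colouring to factor through $\psi$ is an unnecessary restriction. The paper's (iii)$\Rightarrow$(i) avoids the free group entirely at this stage and argues greedily on the min-degenerate labelling itself: process the triples of $\mathcal{A}$ in increasing order of their minima; min-degeneracy guarantees that a label is non-minimal in at most one triple, so when a triple is processed its two non-minimal labels are still uncoloured, and one extends the (possibly already fixed) colour $r$ of the minimum by any admissible triple $(r,s,t)$, which exists as soon as $\delta(\cp)>0$. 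Your intermediate construction of $\psi$ from $\varphi$ via reposition words is correct and gives (iii)$\Rightarrow$(ii) directly, but it does not help with (iii)$\Rightarrow$(i); the greedy argument on $\mathcal{A}$ is the missing idea.
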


\begin{proof}

\emph{$\ref{item:pve0}\Rightarrow\ref{item:fgroup}$.}
Let $\hat F$
be the graph on the vertex set ${V(F) \choose 2}$
where two pairs of the form $uv,uw$
are connected through an edge
if $uvw$ is an edge in $F$.
Let $d$ be the maximum distance
between two vertices of the same component of $\hat F$.
Let $G$ be a finite group, and $x,y\in G$,
such that the homomorphism $f$
with $f(a)=x$ and $f(b)=y$ 
is injective in $B_{2d+1}$.

Construct a palette $\mathcal{P}=(G,\mathcal{A})$
where $\mathcal{A}$ is the family of triples
of the form $(z,zx,zy)$, with $z\in G$.
This palette has positive minimum degree,
so since $\pdpal(F)=0$ by Theorem~\ref{thm:pve},
$F$ admits $\cp$, with some vertex order $\preceq$
and some function $\varphi:{V(F) \choose 2}\rightarrow G$.

Let $S$ be a set containing one vertex
from each component of $\hat F$.
If we left-multiply the image $\varphi(uv)$
of all vertices $uv$ in some component of $\hat F$
by the same element of $G$,
then the image of each triple $(uv,uw,vw)$
is still in $\mathcal{A}$ for each edge $uvw$ of $F$
with $u\prec v\prec w$.
Therefore, we can assume that $\varphi(uv)=0$
for all $uv\in S$.
Then the image under $\varphi$ of each vertex $uv\in V(\hat F)$
is the image under $f$ of some element of $B_{2d}$,
a fact that can be checked by taking a path 
from $uv$ to an element of $S$.
Because $f$ is injective on $B_{2d+1}$,
we can define $\psi(uv)=f^{-1}(\varphi(uv))$.

For every edge $uvw\in E(F)$ with $u\prec v\prec w$,
we have the relation $f(\psi(uw))=\varphi(uw)=\varphi(uv)\cdot x=f(\psi(uv))\cdot x=f(\psi(uv)\cdot a)$.
Since $\psi(uv)$ is in the image of $B_{2d}$,
$\psi(uv)\cdot a$ is in the image of $B_{2d+1}$.
But $f$ is injective on $B_{2d+1}$,
so $\psi(uw)=\psi(uv)\cdot a$. 
Similarly, $\psi(vw)=\psi(uv)\cdot b$.
We conclude that $F$ satisfies \ref{item:fgroup}. 

\emph{$\ref{item:fgroup}\Rightarrow\ref{item:minlayer}$.}
Suppose that $\ref{item:fgroup}$ holds.
Take an ordering $\prec$ of the elements in $F_{\{a,b\}}$
in the image of $\psi$,
where for any $x,y\in F_{\{a,b\}}$,
if the word $x$ is shorter than the word $y$ then $x<y$.
Words of the same length are ordered arbitrarily.
Take a function $f$ from the image of $\psi$ to $\NN$ preserving this order.
We claim that the set $\mathcal{A}$ containing
the images of the triples $(x,xa,xb)$ is min-degenerate.
Hence, $F$ is min-layered.

It is clear that the three elements $(x,xa,xb)$ are always distinct.
For every $x\in F_{\{a,b\}}$,
there are exactly three triples containing $x$,
namely $(x,xa,xb)$, $(xa^{-1},x,xa^{-1}b)$ and $(xb^{-1},xb^{-1}a,x)$.
If $x$ is non-empty, there is exactly one of those triples
in which $x$ is not the minimal element.
It is the first triple if the last letter of $x$ is $a^{-1}$ or $b^{-1}$,
the second one if the last letter is $a$, 
and the third one if the last letter is $b$.
This shows that $\mathcal{A}$ is min-degenerate.

\emph{$\ref{item:minlayer}\Rightarrow\ref{item:pve0}$.}
Suppose that $F$ is min-layered,
as shown by the ordering $\preceq$, 
the function $\varphi$ and the min-degenerate set $\mathcal{A}$.
Let $\cp=(\mathcal{C},\mathcal{A}')$ be any palette with $\delta(\cp)>0$.
We will construct a function $\psi:\NN\rightarrow\mathcal{C}$
satisfying that the image of each triple of $\mathcal{A}$ is in $\mathcal{A}'$.
This shows that $F$ admits $\cp$, so $\pdpal(F)=0$
and, by Theorem~\ref{thm:pve}, $\pve(F)=0$.

We go through the triples $(x,y,z)\in\mathcal{A}$
in increasing order of $\min\{x,y,z\}$,
and in each step we define the values
of $\psi(x),\psi(y),\psi(z)$ which have not been defined yet.
Assume, w.l.o.g., that on one of these steps
we have $x<y<z$.
By the min-degenerate structure of $\mathcal{A}$,
we know that $y$ and $z$ have not appeared
in any previously considered triple,
and hence their images are undefined at this stage.
If $\psi(x)$ is undefined, choose its value arbitrarily.
Let $\psi(x)=r$. Since $\delta(\cp)>0$,
there exists a triple in $\mathcal{A}'$
of the form $(r,s,t)$.
Set $\psi(y)=s$ and $\psi(z)=t$.
Continuing with this procedure,
the image of every triple of $\mathcal{A}$ is in $\mathcal{A}'$.

\end{proof}

\section{Cherry Tur\'an density}\label{sec:remarks}

In addition to $\pu$ and $\pve$, 
there is a third variant of uniform Tur\'an density
for which Reiher asked about its relation to palettes.
Let $H$ be a $3$-graph.
Given two sets $A,B\subseteq V(H)^2$,
we denote
\[K_{\pcherry}(A,B)=\{(a,b,c)\in V(H)^3: (a,b)\in A, (a,c)\in B\}.\]
\[E_{\pcherry}(A,B)=|\{(a,b,c)\in K_{\pcherry}(A,B):abc\in E(H)\}|.\]

\begin{definition}
A $3$-graph $H$ is said to be \emph{$(d,\varepsilon,\pcherry)$-dense}
if for every $A,B\subseteq V(H)^2$
we have $E_{\pcherry}(A,B)\geq d|K_{\pcherry}(A,B)|-\varepsilon|V(H)|^3$.
The \emph{cherry Tur\'an density} $\pch(F)$
of a $3$-graph $F$
is defined as the infimum of the values of $d$, 
for which there exists $\varepsilon>0$ and $N$ such that 
every $(d,\varepsilon,\pcherry)$-dense hypergraph
on at least $N$ vertices
contains $F$ as a subgraph.
\end{definition}

The cherry Tur\'an density of a $3$-graph $F$, 
by analogy to Theorem~\ref{thm:palette}
and Theorem~\ref{thm:pve},
would be related to the minimum codegree of palettes.

\begin{definition}
The minimum codegree of a palette $\cp(\mathcal{C},\mathcal{A})$, 
denoted by $\delta_{\operatorname{cd}}(\cp)$, 
is the largest value of $d$ such that, for all $a,b\in\mathcal{C}$,
\[\{c:(a,b,c)\in\mathcal{A}\},\{c:(a,c,b)\in\mathcal{A}\},\{c:(c,a,b)\in\mathcal{A}\}\geq d|\mathcal{C}|.\]

The parameter $\pdch(F)$
 of a $3$-graph $F$ is defined as 
\[\pdch(F):=\sup\{\delta_{\operatorname{cd}}(\cp):\cp\text{ palette, }F\text{ does not admit }\cp\}.\]
\end{definition}

Reiher asked whether $\pch(F)=\pdch(F)$ for all $3$-graphs $F$.
By a result analogous to Proposition~\ref{prop:reiher}
and Proposition~\ref{prop:reiherpve},
$\pch$ is related to the minimum codegree
of the constituents of a reduced hypergraph.
Therefore, to answer Reiher's question,
it would be enough to prove an analogous of
Lemma~\ref{lem:lowres} about codegree.

There is one important reason why our method
does not easily generalize to take codegree into account.
In the algorithm used in the proof of Lemma~\ref{lem:lowres},
we could potentially control the codegree of pairs of vertices
selected in different rounds,
but we have no simple way of
ensuring that pairs of vertices selected on the same round
have large codegree.

Even if we could somehow prove that $\pch(F)=\pdch(F)$,
it is not clear a priori how to classify
the $3$-graphs $F$ with $\pch(F)=0$.
A reasonable analogy to the min-layered $3$-graphs
from Section~\ref{sec:pve0}
is the following definition
for \emph{max-layered} $3$-graph.

Let $\mathcal{A}\subseteq \NN^3$ be a set
of triples of positive integers.
We say that $\mathcal{A}$ is \emph{max-degenerate} if
each element of $\mathcal{A}$ has a unique maximal entry,
and for each pair $(a_1, a_2, a_3)\neq(b_1, b_2, b_3)\in\mathcal{A}$
we have $\max\{a_1, a_2, a_3\}\neq\max\{b_1, b_2, b_3\}$.
We say that a $3$-graph $F$ is \emph{max-layered} if
there exists an ordering $\preceq$ of $V(F)$
and a function $\varphi:{V(F) \choose 2}\rightarrow\NN$
such that the set \[\mathcal{A}=\{(\varphi(uv),\varphi(uw),\varphi(vw)):uvw\in E(F),u\prec v\prec w\}\]
is max-degenerate.

The motivation for this definition
is that max-layered graphs
have $\pdch(F)=0$.
This can be shown
using a greedy algorithm, similar to the step 
$\ref{item:minlayer}\Rightarrow\ref{item:pve0}$
in the proof of Theorem~\ref{thm:pve0}.
Compare this with the notion of layered $3$-graph
from~\cite{DLLWY24},
which is similarly motivated
by an iterative approach.
This connection is not merely a superficial resemblance,
as both layer structures relate
to constructions in $3$-graphs with minimum codegree conditions.

However, showing that $\pdch(F)=0$
implies that $F$ is max-layered
seems to be a harder problem
than proving that $\pdpal(F)=0$
implies that $F$ is min-layered.

\begin{question}
Is it true that $\pdch(F)=0$ if and only if $F$ is max-layered?
\end{question}

\section{Concluding remarks}\label{sec:finremarks}

Since the uniform Tur\'an density
of a $3$-graph $F$ depends only
on the palettes that $F$ admits,
one would like to understand in which situations
will there exist a $3$-graph $F$
admitting all palettes in some family $\{\cp_1, \cp_2, \dots, \cp_m\}$,
but none of $\{\mathcal{Q}_1, \mathcal{Q}_2, \dots, \mathcal{Q}_n\}$.

One particular reason why someone might be interested
in this type of questions
is that it would allow us to prove results
about the set of values of $\pu$
without the need to look at $3$-graphs at all.
Say that, for some $\alpha\in[0,1]$,
we want to show the existence of a $3$-graph with $\pu(F)=\alpha$.
First, we find a palette $\mathcal{Q}$ with $d(\mathcal{Q})=\alpha$.
Second, we find a family of palettes $\{\cp_1, \cp_2, \dots, \cp_n\}$
such that every palette with density strictly greater than $\alpha$
``contains'' some palette $\cp_i$ (whatever that means).
Then, using some black box,
we show the existence of a $3$-graph $F$ which admits all $\cp_i$, but not $\mathcal{Q}$.
This $F$ satisfies $\pu(F)=\alpha$.

There is a natural way to define such a containment relation.
We say that $\cp=(\mathcal{C}_{\cp}, \mathcal{A}_{\cp})$ 
is a \emph{subpalette} of $\mathcal{Q}=(\mathcal{C}_{\mathcal{Q}}, \mathcal{A}_{\mathcal{Q}})$,
and denote it as $\cp\subseteq\mathcal{Q}$,
if there exists a function $f:\mathcal{C}_{\cp}\rightarrow\mathcal{C}_{\mathcal{Q}}$
such that for all $(x,y,z)\in\mathcal{A}_{\cp}$,
we have $(f(x),f(y),f(z))\in\mathcal{A}_{\mathcal{Q}}$.
An important property of this relation is that,
if $\cp\subseteq \mathcal{Q}$,
then every $3$-graph $F$ admitting $\cp$
also admits $\mathcal{Q}$.

The analysis of when there exists a $3$-graph $F$
admitting certain patterns $\{\cp_1, \cp_2, \dots, \cp_m\}$,
but none of $\{\mathcal{Q}_1, \mathcal{Q}_2, \dots, \mathcal{Q}_n\}$
will be the topic of an upcoming paper~\cite{KraKLT24}.
In this paper, a characterization of such families $\{\cp_i\}$ and $\{\mathcal{Q}_j\}$
will be presented.
However, the general characterization
requires the introduction and explanation of several concepts.
For now, as a preview,
we will state the characterization for 
one-on-one comparisons between palettes.

Given a palette $\cp=(\mathcal{C},\mathcal{A})$,
we define its \emph{reverse},
denoted by $\operatorname{rev}(\cp)$,
as the palette $(\mathcal{C},\operatorname{rev}(\mathcal{A}))$, with
\[\operatorname{rev}(\mathcal{A})=\{(z,y,x):(x,y,z)\in\mathcal{A}\}.\]
This palette satisfies that $F$ admits $\operatorname{rev}(\cp)$
if and only if it admits $\cp$,
by reversing the order of the vertices of $F$.
Thus, if $\cp\subseteq \operatorname{rev}(\mathcal{Q})$,
then every $3$-graph which admits $\cp$ also admits $\mathcal{Q}$.

\begin{theorem}\label{thm:oneonone}
Let $\cp$ and $\mathcal{Q}$ be palettes.
Then there exists a $3$-graph $F$
which admits $\cp$ but not $\mathcal{Q}$
iff both $\cp\not\subseteq \mathcal{Q}$
and $\cp\not\subseteq\operatorname{rev}(\mathcal{Q})$ hold.
\end{theorem}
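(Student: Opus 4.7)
The ``only if'' direction is immediate: if $\cp\subseteq\mathcal{Q}$ is witnessed by a map $f:\mathcal{C}_{\cp}\to\mathcal{C}_{\mathcal{Q}}$, then any $F$ admitting $\cp$ via an order $\preceq$ and coloring $\varphi$ automatically admits $\mathcal{Q}$ via $(\preceq, f\circ\varphi)$; the analogous statement with $\cp\subseteq\operatorname{rev}(\mathcal{Q})$ follows by reversing $\preceq$. Hence, the existence of $F$ admitting $\cp$ but not $\mathcal{Q}$ forces both $\cp\not\subseteq\mathcal{Q}$ and $\cp\not\subseteq\operatorname{rev}(\mathcal{Q})$.

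For the converse direction, assume $\cp\not\subseteq\mathcal{Q}$ and $\cp\not\subseteq\operatorname{rev}(\mathcal{Q})$. The case $\mathcal{A}_{\cp}=\emptyset$ is vacuous, since then $\cp$ is trivially a subpalette of every palette, so we may suppose $\mathcal{A}_{\cp}\neq\emptyset$. Specializing each of the two non-containment hypotheses to the constant functions $f\equiv d$ yields the key observation that $(d,d,d)\notin\mathcal{A}_{\mathcal{Q}}$ for every $d\in\mathcal{C}_{\mathcal{Q}}$ (both $\cp\not\subseteq\mathcal{Q}$ and $\cp\not\subseteq\operatorname{rev}(\mathcal{Q})$ give the same conclusion here because the triple $(d,d,d)$ is invariant under reversal).

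The plan is to construct $F$ as a standard palette-generated random $3$-graph. For $n$ sufficiently large, sample $\varphi:\binom{[n]}{2}\to\mathcal{C}_{\cp}$ uniformly at random and take the edge set of $F$ to be $\{uvw:u<v<w,\ (\varphi(uv),\varphi(uw),\varphi(vw))\in\mathcal{A}_{\cp}\}$. By construction $F$ admits $\cp$. Assume for contradiction that some pair $(\preceq,\psi)$ also witnesses admission of $\mathcal{Q}$. Applying the Erd\H{o}s--Szekeres theorem to the permutation of $[n]$ that sends $<$ to $\preceq$ produces a subset $T\subseteq[n]$ of size $\sqrt n$ on which $\preceq$ is everywhere consistent with $<$ or everywhere its reverse. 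Applying Ramsey's theorem to the $|\mathcal{C}_{\mathcal{Q}}|$-coloring $\psi|_T$ then extracts a further subset $T''\subseteq T$ of prescribed size $N$ on which $\psi$ is constantly equal to some $d$. In either consistency regime, any edge of $F$ contained in $T''$ would force $(d,d,d)\in\mathcal{A}_{\mathcal{Q}}$ by the admissibility of $\mathcal{Q}$, contradicting the previous paragraph; hence $T''$ must be independent in $F$.

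The heart of the argument, and the main technical obstacle, is to choose $n$ and $N$ so that no independent set of size $N$ exists in $F$, while Ramsey's theorem still guarantees a monochromatic $T''$ of size at least $N$. For a uniformly random $\varphi$, the edge-disjoint-triple trick inside a prospective $N$-independent set (there are $\Omega(N^2)$ edge-disjoint triples, each missing $\mathcal{A}_{\cp}$ independently with probability $1-|\mathcal{A}_{\cp}|/|\mathcal{C}_{\cp}|^3$) gives the bound $\alpha(F)=O(\log n)$, which is compatible with the Ramsey bound $N\approx\log n/\log|\mathcal{C}_{\mathcal{Q}}|$ in favorable regimes of the parameters. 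In full generality, balancing these two bounds appears to require either a pseudo-random deterministic choice of $\varphi$ (with controlled maximum monochromatic cliques) or an iterated Ramsey-style refinement that successively controls $\varphi$ and $\psi$ together on shrinking subsets, in the spirit of the algorithmic scheme developed for Lemma~\ref{lem:lowres}. Carrying this out uniformly across all pairs $(\cp,\mathcal{Q})$ is the subject of the forthcoming paper~\cite{KraKLT24}.
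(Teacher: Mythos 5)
The ``only if'' half of your argument is fine, but note that the paper does not actually prove Theorem~\ref{thm:oneonone} at all: it is stated as a preview and its proof is deferred to the forthcoming paper~\cite{KraKLT24}, so there is no in-paper argument to match, and your converse direction does not close the gap either -- as you acknowledge. More importantly, the gap is not just the quantitative balancing you flag: after your ``key observation'', the only property of $\mathcal{Q}$ your argument ever uses is that $(d,d,d)\notin\mathcal{A}_{\mathcal{Q}}$ for every $d\in\mathcal{C}_{\mathcal{Q}}$, which is strictly weaker than the hypotheses $\cp\not\subseteq\mathcal{Q}$ and $\cp\not\subseteq\operatorname{rev}(\mathcal{Q})$. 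If the Erd\H{o}s--Szekeres/Ramsey/independence-number scheme could be completed for every pair with that weak property, it would apply verbatim to $\cp=\mathcal{Q}=\bigl(\{1,2\},\{(1,2,1),(2,1,2)\}\bigr)$, where no $F$ admitting $\cp$ but not $\mathcal{Q}$ exists (since $\cp\subseteq\mathcal{Q}$); hence the balancing provably cannot be achieved in general. One can see the failure concretely in that example: the random $\cp$-generated $F$ has independent sets of size roughly $2\log_2 n$ (any monochromatic clique of $\varphi$ is independent, since $(1,1,1),(2,2,2)\notin\mathcal{A}_{\cp}$), while your Ramsey step only guarantees a monochromatic $T''$ of size on the order of $\tfrac14\log_2 n$, so no contradiction is reached -- and indeed this $F$ admits $\mathcal{Q}$ via the natural order and $\varphi$ itself. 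Your edge-disjoint-triples bound $\alpha(F)=O(\log n)$ has a constant depending on $d(\cp)$ that in general dwarfs the Ramsey constant, and this is unavoidable within your reduction.

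The upshot is that a correct proof must exploit witnesses of non-containment by arbitrary, non-constant colorings, i.e., the full strength of $\cp\not\subseteq\mathcal{Q}$ and $\cp\not\subseteq\operatorname{rev}(\mathcal{Q})$, and the $3$-graph $F$ has to be built with the pair $(\cp,\mathcal{Q})$ in mind rather than taken to be a generic $\cp$-random construction (compare the proof of Theorem~\ref{thm:inffamily}, where Lemma~\ref{lem:monotone} forces a monotone edge in every ordering and the specific structure of the target palette is then used to rule out admission). As written, your proposal establishes only the easy direction; the substantive direction remains open, and the particular route you sketch cannot be completed without changing the information it feeds on.
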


This theorem could have been used
in the proof of Theorem~\ref{thm:inffamily}.
Here, we take the palette $\mathcal{Q}=([k],\mathcal{A})$
with $\mathcal{A}=\{(x,y,z):x<z\}$,
and the palette $\cp=(\mathcal{C},\mathcal{A}')$
with \[\mathcal{C}=\{a_1, a_2, \dots, a_{k+1},b_1, b_2, \dots, b_k\}\quad
\text{and} \quad\mathcal{A}'=\{(a_i, b_i, a_{i+1}):i\in[k]\}.\]
In our proof of Theorem~\ref{thm:inffamily},
we constructed $F$ which admits $\cp$
but not $\mathcal{Q}$ manually,
rather than relying on Theorem~\ref{thm:oneonone}.

\section*{Acknowledgements}

The author would like to thank Dan Kr\'al' and Frederik Garbe
for helpful discussions on the topic.
We would also like to thank Dan Kr\'al', Hong Liu, Laihao Ding and Zhuo Wu
for carefully reading a draft of this paper.

\bibliographystyle{abbrv}
\bibliography{bibliog.bib}
\end{document}